\newtheorem{theorem}{Theorem}[section]
\newtheorem{proposition}[theorem]{Proposition}
\newtheorem{corollary}[theorem]{Corollary}
\theoremstyle{remark}
\newtheorem{remark}{Remark}
\begin{document}

\title{The quadratic minimum spanning tree problem: lower bounds via extended formulations}
\author{ {Renata Sotirov}  \thanks{Department of Econometrics and OR, Tilburg University, The Netherlands, {\tt r.sotirov@uvt.nl}}
	\and
Zoe Verch\'ere
\thanks{Doctorant \'a UMA, ENSTA Paris, Institut Polytechnique de Paris, 91120 Palaiseau, France, {\tt zoe.verchere@ensta-paris.fr}}
 }

\date{}

\maketitle

\begin{abstract}
The quadratic minimum spanning tree problem (QMSTP) is the problem of finding a spanning tree of a graph such that   the total interaction  cost between pairs of edges
in the tree is minimized.
We first show  that the bounding approaches  for the QMSTP in the literature are closely related.
Then, we exploit an extended formulation for the minimum spanning tree problem to
derive a sequence of relaxations for the QMSTP with increasing complexity and quality.
The resulting relaxations differ from the relaxations in the  literature.
Namely, our  relaxations have a polynomial number of constraints and can be efficiently solved by a cutting plane algorithm.
Moreover our bounds outperform most of the bounds from the literature.

\end{abstract}

\noindent Keywords: quadratic minimum spanning tree problem, linearization problem, extended formulation, Gilmore-Lawler bound

\section{Introduction}
The quadratic minimum spanning tree problem (QMSTP) is the problem of finding a spanning tree of a connected and undirected graph
such that the sum of interaction costs over all pairs of edges in the tree is minimized.
The QMSTP  was  introduced by Assad and Xu \cite{AssadXu} who have proven that the problem is strongly ${\mathcal NP}$-hard.
The QMSTP remains ${\mathcal NP}$-hard   even  when the cost matrix is of rank one \cite{Punnen2001}.
The adjacent-only  quadratic minimum spanning tree problem (AQMSTP),
that is the QMSTP where the interaction costs  of all non-adjacent edge pairs are assumed to be zero, is also introduced in \cite{AssadXu}.
That special version of the QMSTP is also  strongly  ${\mathcal NP}$-hard.
The QMSTP has applications in fields such as telecommunication, irrigation, transportation, and energy and hydraulic
networks, see e.g., \cite{AssadXu,CHIANG2007734,Chou1973AUA}.

The QMSTP can be seen as a generalization of several well known optimization problems, including the quadratic assignment problem \cite{AssadXu}
and the satisfiability problem \cite{Cordone2012SolvingTQ}.
There are also numerous variants of the QMSTP problem such as the minimum spanning tree problem with conflict pairs,
 the quadratic bottleneck spanning tree problem, and the bottleneck spanning tree problem with conflict pairs.
 For description of those problems, see e.g.,  \citet{CUSTIC201873}.
 In the same paper the authors investigate the complexity of the QMSTP and its variants, and prove intractability results for the mentioned variants
 on fan-stars, fans, wheels, $(k, n)$--accordions, ladders and $(k, n)$--ladders.
 The authors also prove that the AQMSTP on $(k, n)$--ladders and the  QMSTP where the cost matrix  is a permuted doubly graded matrix  are polynomially solvable.

\citet{Custic2015ACO} prove that the QMSTP on a complete graph  is linearizable if and only if a symmetric cost matrix  is a symmetric weak sum matrix.
An instance of the QMSTP  is called linearizable  if there exists an instance of the minimum spanning tree problem  (MSTP) such that the associated
costs for both problems are equal for every feasible spanning tree.
 The authors of \cite{Custic2015ACO}  also  present linearizability results for  the  QMSTP on complete bipartite graphs,
 on the class of graphs in which every biconnected component is either
a clique, a biclique or a cycle, as well as on biconnected graphs that contain a vertex with degree two.
Note that linearizable instances for the QMSTP can be solved in polynomial time.

There is a substantial amount of research on lower bounding approaches and exact algorithms for the QMSTP.
\citet{AssadXu} propose a lower bounding procedure that
generates a sequence of lower bounds. In each iteration of the algorithm, they solve a  minimum spanning tree problem.
\citet{ONCAN20101762} propose a Lagrangian relaxation scheme based on the linearized QMSTP formulation from \citep{AssadXu}
 that is strengthened by adding two sets of  valid inequalities.
\citet{Cordone2012SolvingTQ} re-implement the Lagrangian approach from  \citep{AssadXu} with some minor modifications that result
in improved CPU times but slightly weaker lower bounds.
Lower bounding approaches in \cite{ONCAN20101762,AssadXu,Cordone2012SolvingTQ} yield  Gilmore-Lawler (GL) type  bounds for the QMSTP.
The GL procedure is a well-known approach to construct lower bounds for quadratic binary optimization problems,
see e.g., \cite{Gilmore,Lawler}.
\citet{{PEREIRA2015149}} derive lower bounds for the QMSTP  using the reformulation linearization technique (RLT) and
introduce bounding procedures based on Lagrangian relaxation.
 The RLT provides a hierarchy of relaxations that are ranging between the continuous and convex hull representation for linear $0-1$ mixed-integer programs \cite{SheraliAdams90,SHERALI199483}.
In~\cite{sherali2013reformulation}, the RLT is generalized for solving discrete and continuous nonconvex problems.
The RLT consists of the following two steps; a reformulation step in which additional nonlinear valid inequalities are generated,
and a linearization step in which each product term  is replaced by a continuous variable. The level of the  hierarchy corresponds to the degree of polynomial terms produced during the reformulation stage.
The authors from  \cite{PEREIRA2015149}  report solving instances with up to 50 vertices to optimality.
\citet{Rostami2015LowerBF} consider  the GL procedure, as well as the (incomplete) first and second level RLT bounds for the QMSTP.
To compute those bounds, the authors solve  Lagrangian relaxations.
Relaxations presented in \cite{PEREIRA2015149,Rostami2015LowerBF} are not complete first and second level RLT relaxations.
Namely, those relaxations do not contain  a large class of RLT constraints.
\citet{GUIMARAES202046} consider semidefinite programming (SDP) lower bounds for the QMSTP and report the best exact solution approach for the problem up to date.
The SDP bounds do not belong to the RLT type bounds.
Exact approaches for the QMSTP are also considered in \cite{AssadXu,Cordone2012SolvingTQ,PEREIRA2015149,Pereira2015BranchandcutAB}.

Various heuristic approaches for the QMSTP are tested in the literature.
For example, genetic algorithms for the QMSTP are implemented in \cite{ZHOUT1998229,CHIANG2007734,GAO2005773},
a tabu search in \cite{LozanoEtAl,Cordone2012SolvingTQ,{PalubeckisEtAl}},  a swarm intelligence approach in \cite{SUNDAR20103182}, and evolutionary algorithms  in \cite{SoakCorne,Soak2006TheER}.
 \citet{PalubeckisEtAl} compare simulated annealing, hybrid genetic and iterated tabu search algorithms for the QMSTP.
The results show that their tabu search algorithm outperforms the other two approaches in terms of both solution quality and computation time.
A local search algorithm for the QMSTP that alternatively performs  a neighbourhood search and a random move phase is introduced in \cite{ONCAN20101762}.

\medskip\medskip

\subsection{Main results and outline}
Most of the lower bounding approaches for the QMSTP are closely related as we show in the next section.
Namely, those bounds are based on Lagrangian relaxations obtained from the RLT type of bounds
and solved by specialized iterative methods.
This connection was not made till now, to the best of our knowledge.
Even  semidefinite programming lower bounds  for the QMSTP involve Lagrangian relaxations.

In this paper we use a different approach  for solving the QMSTP.
We exploit an extended formulation for the minimum spanning tree problem to compute lower bounds for the QMSTP.
Our relaxations have a polynomial number of constraints and  we solve them using a cutting plane algorithm.

To derive our relaxations we first consider a linearization based relaxation for the QMSTP.
The linearization based relaxation finds the best possible linearizable matrix  for the given QMSTP  instance, by solving a linear programming (LP) problem.
In particular, it searches for the best under-estimator of the quadratic objective function that is in the form of a  weak sum matrix.
Linearization based bounds are introduced by \citet{HuSotirov2} and further exploited by \citet{MeijerSotirov}.
Next, we consider the continuous relaxation of an exact linear formulation for the QMSTP from  \cite{AssadXu}, and prove that its dual
 is equivalent to the  linearization based relaxation. To derive both relaxations, we exploit the polynomial size extended formulation for the MSTP by  \citet{MARTIN1991119}.

In order to improve the continuous relaxation of the QMSTP formulation from  \cite{AssadXu}
we add facet defining inequalities of the Boolean Quadric Polytope (BQP) \cite{Padberg}.
We show that the linearization based relaxation with a  particular subset of the  BQP inequalities
is not dominated by the incomplete first level RLT relaxation from the literature  \cite{PEREIRA2015149,Rostami2015LowerBF}, or vice versa.
However, after adding all BQP cuts to the linearization based relaxation  the resulting bounds  outperform even the incomplete
second level RLT bounds from the literature   on some instances.
Since adding all BQP inequalities at once is computationally expensive, we  implement a cutting plane approach that considers the most violated constraints.

This paper is organized as follows.
In Section \ref{TheQMST} we formally  introduce the  QMSTP and present the extended formulation for the MSTP from \cite{MAGNANTI1995503}.
In Section \ref{sect:overwierBNDS} we provide an overview of lower bounds for the QMSTP.
In particular, Section \ref{GilmoreLawler} presents the Gilmore-Lawler type bounds including Assad-Xu and \"{O}ncan-Punnen bounds,
and Section \ref{sect:RLTold} the RLT type bounds. New relaxations are presented in Section \ref{sect:newBounds}.
Section \ref{sect:linBasedBnd} introduces a linearization based relaxation, and Section \ref{sec:newLower} provides several new relaxations of increasing complexity.
Numerical results and   concluding remarks are in Section \ref{sect:nnumerics} and Section \ref{conclusion}, respectively. \\

\subsection*{Notation}
Given a subset $S \subseteq V$ of vertices in a graph $G=(V,E)$,  we denote the set of edges with both endpoints in $S$
by $E(S):=\{e \in E~|~e=\{i,j\},~  i,j \in S \}$.
Further, we use $\delta(S) \subseteq E$ to denote the set of edges with exactly one endpoint in $S$.

We introduce the operator $\text{Diag}: \mathbb{R}^{n} \rightarrow \mathbb{R}^{n \times n}$ that maps a vector to a diagonal matrix
whose diagonal elements correspond to the elements of the input vector.
For two matrices $X=(x_{ij}),~Y=(y_{ij}) \in \mathbb{R}^{n\times m}$,  $X\geq Y $ means $x_{ij} \geq y_{ij}$ for all $i,j$.
We denote by ${\mathbf 0}$ the  all-zero vector of appropriate size.

\section{The Quadratic Minimum Spanning Tree Problem} \label{TheQMST}

Let us formally introduce the quadratic minimum spanning tree problem.
We  are given a connected,  undirected graph $G=(V,E)$ with $n=|V|$ vertices and $m=|E|$ edges,
and a matrix $ Q = (q_{ef}) \in \mathbb{R}^{m\times m} $ of interaction costs between edges of $G$.
For $e=f$, $q_{ee}$ represents the cost of edge $e$.
We assume without loss of generality that $Q=Q^{\mathrm T}$.
The QMSTP can be formulated as follows:
\begin{equation} \label{eq:QMST}
\min \left \{ \sum_{e \in E} \sum_{f \in E} q_{ef} x_e x_f \, \, | ~~ x \in \mathcal{T} \right \},
\end{equation}
where $\mathcal{T}$ denotes the set of all spanning trees, and each spanning tree is represented by an incidence vector $x$ of length $m$, i.e.,
\begin{align}\label{T}
\mathcal{T} := \left \{ x \in \{0,1\}^{m} \, \, | ~~\sum_{e\in E} x_e = n-1, ~ \sum_{e \in E(S)} x_e \leq |S| -1, ~S\subset V, ~ |S|\geq 2 \right \}.
\end{align}
We denote by $\mathcal{T}_{\mathcal R}$ the convex hull of the elements from $\mathcal{T}$.

If the matrix $Q$ is a diagonal matrix i.e., $Q={\rm Diag}(p)$ for some $p\in \mathbb{R}^m$, then the QMSTP reduces to the minimum spanning tree problem:
\begin{equation} \label{eq:MSTP}
\min \left \{  \sum_{e\in E} p_e x_e  \, \, | ~~ x \in \mathcal{T} \right \}.
\end{equation}
It is a well known result by   \citet{Edmonds1971MatroidsAT} that the linear programming  relaxation of \eqref{eq:MSTP} has an integer polyhedron.
However, the spanning tree polytope has an exponential number of constraints.
Nevertheless, the MSTP is solvable in polynomial time by e.g., algorithms developed by  \citet{Prim1957} and \citet{Kruskal1956}.

There exists a polynomial size extended formulation for the minimum spanning tree problem, due to \citet{MARTIN1991119}.
He derived the polynomial size formulation for the MSTP by exploiting
a known result (see e.g., \cite{MAGNANTI1995503}) that one can test  if a vector violates subtour elimination constraints by solving a max flow problem.
 \citet{MARTIN1991119} uses the following valid separation  linear program
 for subtour elimination constraints  that contain vertex $k$:
\begin{equation} \label{separation1}
(SP_k(x)) \quad
    \begin{array}{rll}
    \text{max} & \hspace{1mm}\sum\limits_{e \in E}x_e \alpha^k_e -
     \sum\limits_{i=1, i\neq k}^n \theta^k_i\\[2.5ex]
        \text{s.t.} & \hspace{1mm} \alpha^k_e - \theta^k_i \leqslant 0 &  \forall e \in \delta(i), ~ i \in V  \\[1ex]
    & \hspace{1mm}\theta^k_i \geqslant 0 & \forall i.
    \end{array}
\end{equation}
The above max flow problem formulation is from \cite{Rhys}.
Thus, for $\tilde{x} \in \mathbb{R}^{m}$, $\tilde{x} \geq 0$, the objective value of  $SP_k(\tilde{x})$ equals zero if and only if
$ \sum_{e \in E(S)}\tilde{x}_e \leq |S| -1$ where  $k\in S$, see \cite{MARTIN1991119}.
By exploiting the dual problem of \eqref{separation1} for all vertices $k \in V,$ \citet{MARTIN1991119}
derived  the following extended formulation for the minimum spanning tree problem:
\begin{subequations}
\begin{align}
\min & ~~\sum_{e \in E} p_e x_e  \label{eq:eq1}  \\
{\rm s.t.} & ~~\sum_{e \in E} x_e = n-1  \label{eq:con1}  \\
     & z_{kij} + z_{kji} = x_e  & k = 1,\dots, n,  ~e=\{i,j\} \in E \label{eq:lincon2} \\
   & \sum_{s>i} z_{kis} + \sum_{h<i} z_{kih} \leq 1  & k = 1,\dots,n, ~i \neq k \label{eq:con3}\\
    & \sum_{s>k} z_{kks} + \sum_{h<k} z_{kkh} \leq 0  & k = 1,\dots,n \label{eq:con4} \\
      & z_{kij}\geq 0  & k,i,j = 1,\dots,n \label{eq:con5}  \\
    & x_e\geq 0 &\forall e \in E. \label{eq:conLast}
\end{align}  \label{extendModel}
\end{subequations}
There are $O(n^3)$ constraints and  $O(n^3)$  variables in the above LP relaxation.
Although it is impractical to use \eqref{extendModel} to find the minimal cost spanning tree because there exist
 several efficient algorithms for solving the MSTP,
we show that it is beneficial to use an extended formulation for solving the QMSTP.
There are several  equivalent formulations  of the extended model \eqref{extendModel}, see e.g., \cite{Kaibel}.

Let us define
\begin{equation} \label{TE}
\mathcal{T}_{E} := \left \{x \in \mathbb{R}^m,~ z \in \mathbb{R}^{m \times m \times m}|~  \eqref{eq:con1}-\eqref{eq:conLast}    \right \}.
\end{equation}
Now, the QMSTP can be formulated as follows:
\begin{equation} \label{eq:QMST1}
\min \left \{ \sum_{e \in E} \sum_{f \in E} q_{ef} x_e x_f \, \, | ~~ (x, z) \in \mathcal{T}_E \right \}.
\end{equation}
For future reference, we present  the dual of \eqref{extendModel}:
\begin{subequations}
\begin{align}
\max & ~~{-(n-1)\epsilon - \sum_{i=1}^n \sum_{{j=1,j\neq i}}^n \mu_{ij}} & \label{objDual1}\\
 {\rm s.t.} & ~~  \sum_{k=1}^n \theta_{ke} - \epsilon ~\leq p_e & \forall e \in E \\
 & ~~ \mu_{ki} + \sum_{e \in E(\{i,j\})} \theta_{ke} ~\geq 0 & k,i,j = 1,\dots,n \\
& ~~ \mu_{ki} \geq 0 & k,i = 1,\dots,n \\
& ~~ \epsilon\in \mathbb{R}, ~~ \theta_{ke} \in \mathbb{R}     & k= 1,\dots,n, ~e\in E.
\end{align}  \label{DualextendModel}
\end{subequations}

\section{An overview of lower bounds for the QMSTP} \label{sect:overwierBNDS}

In this section we present several known lower bounding approaches for the QMSTP.
We classify bounds into the Gilmore-Lawler type bounds and  the RLT type bounds.
Then, we show how those two types of bounding approaches are closely related.

\subsection{The Gilmore-Lawler type bounds} \label{GilmoreLawler}

We consider here the  classical Gilmore-Lawler  type bound for the QMSTP.
The GL procedure is a well-known approach to construct lower bounds for binary quadratic optimization problems.
The bounding procedure was introduced by \citet{Gilmore} and \citet{Lawler} to compute a lower bound for the quadratic assignment problem.
Nowadays, the  GL bounding procedure is extended to many other optimization  problems including  the quadratic minimum spanning tree problem \cite{Rostami2015LowerBF},
the quadratic shortest path problem \cite{Rostami}, and the quadratic cycle cover problem \cite{MeijerSotirov}.

The GL procedure  for the QMSTP is as follows.
For each edge $e \in E$, solve the following optimization problem:
\begin{equation}\label{ze}
z_e :=\min \left \{ \sum\limits_{f \in E} q_{ef} x_f \, \, | ~~ x_e = 1, ~x \in \mathcal{T} \right \}.
\end{equation}
The value $z_e$ is the minimum contribution to the QMSTP objective with $e$ being in the solution.
Then, solve the following minimization problem:
\begin{equation}  \label{GLb}
GL(Q) := \min \left \{ \sum\limits_{e \in E} z_e x_e \, \, | ~~ x \in \mathcal{T} \right \}.
\end{equation}
The optimal solution of the above optimization  problem is the  GL type lower bound for the QMSTP.
Note that in  \eqref{ze} and \eqref{GLb} one can equivalently solve the corresponding continuous relaxations,
thus optimize over $\mathcal{T}_{\mathcal R}$ or  $\mathcal{T}_E$, or use one of the efficient algorithms for solving the MSTP.

\citet{Rostami2015LowerBF} prove that one can also compute the GL type  bound by solving the following LP problem:
\begin{subequations}
\begin{align}
GL(Q) := \min & \sum_{e,f \in E} q_{ef}y_{ef} \\
{\rm s.t.} & ~~\sum_{f \in E} y_{ef} = (n-1) x_e \qquad   \forall  e \in E \label{con GLBone}\\
 & \sum_{f \in E(S)} y_{ef} \leq (|S|-1)x_e \quad  ~ \forall  e \in E, ~\forall S \subset V, ~S \neq \emptyset \label{con leqy} \\
 & y_{ee} = x_e \quad \forall e \in E \label{yeexee}\\
&  y_{ef} \geq 0 \quad \forall e,f \in E   \\
&  x \in \mathcal{T}_{\mathcal R}. \label{con GLBlast}
\end{align}  \label{MILP_GL}
\end{subequations}
Note that from \eqref{con leqy} it follows that $y_{ef} \leq x_e$ for all $e,f\in E$.
Clearly, it is more efficient to compute the GL bound by solving \eqref{ze}--\eqref{GLb}, than solving \eqref{MILP_GL}.

The Gilmore-Lawler  type bound can be further improved within an iterative algorithm that exploits equivalent reformulations of the problem.
The so obtained iterative bounding scheme is known as the generalized Gilmore-Lawler bounding scheme.
The generalized GL bounding scheme was implemented for several optimization problems, see e.g., \cite{Carraresi,MeijerSotirov,Rostami,HuSotirov2}.
\citet{HuSotirov2} show that bounds obtained by the GL bounding  scheme are dominated by the first level RLT bound.
The bounding procedure in the next section can be seen as a special case of the generalized Gilmore-Lawler bounding scheme.

\subsubsection{Assad-Xu leveling procedure}

\citet{AssadXu} propose a method that generates a monotonic sequence of lower bounds for the QMSTP,
which results in an improved GL type lower bound.
The approach is based on equivalent reformulations of the QMSTP.

 \citet{AssadXu} introduce first the following transformation of costs:
\begin{equation} \label{gefGamma}
\begin{array}{ll}
        q_{ef}(\gamma) = q_{ef} + \gamma_f, &  \forall e,f \in E,~ e\neq f  \\[1ex]
        q_{e,e} (\gamma) = q_{e,e} - (n-2) \gamma_e, & \forall  e\in E,
    \end{array}
\end{equation}
where $\gamma_e$ ($e\in E$) is a given parameter, and then  apply the GL procedure for a given $\gamma$.
In particular, \citet{AssadXu} solve the following problem:
\begin{equation}\label{fgamma}
 f_e(\gamma) :=  \min  \left \{  \sum\limits_{f \in E} q_{ef}(\gamma)  x_f  \, \, | ~~  x_e = 1, ~x \in \mathcal{T} \right \},
\end{equation}
for each edge $e \in E$.
The value $f_e(\gamma)$ is the minimum contribution to the reformulated QMSTP objective with  $e$ being in the solution.
Then $f_e(\gamma)$  is used in the  following optimization problem:
\begin{equation} \label{AX}
 AX(\gamma) :=  \min \left \{ \sum\limits_{e \in E} f_e(\gamma)  x_e  \, \, | ~~   x \in \mathcal{T} \right \}.
\end{equation}
The optimal solution of the above optimization problem is a GL type lower bound for the QMSTP that depends on a parameter $\gamma$.
In the case that $\gamma$ is all zero-vector, the corresponding lower bound $AX(\gamma)$ is
equal to the GL type lower bound described in the previous section.

The function $AX(\gamma)$ is a piecewise linear, concave function.
In order to find $\gamma$ that provides the best possible bound of type \eqref{AX}, that is to solve $\max_{\gamma} AX(\gamma)$,
the authors propose Algorithm \ref{AlgorithmAX}.

\begin{algorithm}[H]
\small
\caption{Assad-Xu leveling procedure}\label{AlgorithmAX}
\begin{algorithmic}[1]
\State $\epsilon >0$, $\gamma^1 = {\mathbf 0}$, $i \gets  1$
\While {$(\max\limits_{e} f_e(\gamma^i) - \min\limits_{e} f_e(\gamma^i) > \epsilon$)}
\State Compute transformed costs using \eqref{gefGamma}.
\State Compute $ f_e(\gamma^i)$ and   $AX(\gamma^i) $ using  \eqref{fgamma} and \eqref{AX}, resp.
\State Update $\gamma^{i+1}_e \longleftarrow  \gamma_e^i + \frac{f_e(\gamma^i)}{n-1} $ for $e\in E$.
\State $i \gets i + 1$
\EndWhile
\State
\Return $AX(Q) \leftarrow AX(\gamma^{i-1})$.
\end{algorithmic}
\end{algorithm}
Algorithm   \ref{AlgorithmAX} resembles  the generalized Gilmore-Lawler bounding scheme.
Since the algorithm starts with $\gamma^1 ={\mathbf 0}$, the first computed bound equals the classical GL type bound.
Note that Algorithm \ref{AlgorithmAX}, in Step 3  adds to the off diagonal elements of the cost matrix $Q$ a weak sum matrix and then subtracts its linearization vector on the diagonal.
For a relation between linearizable  weak sum matrices and corresponding linearization vectors see Section \ref{sect:linBasedBnd}.
Thus Algorithm   \ref{AlgorithmAX}  generates a sequence of equivalent representations of the QMSTP while converging to the optimal $\gamma^*$.

\subsubsection{ \"{O}ncan-Punnen  bound}

The following exact, linear formulation for the QMSTP is presented in \cite{AssadXu}:
\begin{subequations}\label{AXexact}
\begin{align}
 \min & \sum_{e,f \in E} q_{ef}y_{ef} & \label{objOP} \\
{\rm s.t. } &	~~\sum_{f \in E} y_{ef} = (n-1) x_e & \forall e \in E \label{firstOP} \\
& \sum_{e \in E} y_{ef} = (n-1)x_f  & \forall f \in E \\
&  y_{ee} = x_e  & \forall e \in E \\
&  0 \leq y_{ef} \leq 1 & \forall e,f \in E \\
&  x \in \mathcal{T}. \label{lastOP}
\end{align}
\end{subequations}
\citet{ONCAN20101762} propose
 a lower bound for the QMSTP that is derived from  \eqref{AXexact}
 and also includes the following  valid inequalities:
\begin{eqnarray}
\sum_{e \in \delta(i)}  y_{ef} \geq x_f  &  \forall  i \in V, f \in E  \label{eq:EFVI1} \\
\sum_{f \in \delta(i)}  y_{ef} \geq x_e, &  \forall i \in V, e \in E, \label{eq:EFVI2}
\end{eqnarray}
where $\delta(i)$ denotes the set of all incident edges to vertex $i$.
In particular, the   authors from \cite{ONCAN20101762}  propose to solve the Lagrangian relaxation  of  \eqref{objOP}--\eqref{lastOP}, \eqref{eq:EFVI1}--\eqref{eq:EFVI2}
 obtained by dualizing constraints (\ref{eq:EFVI1}).  For a fixed Lagrange mulitiplier $\lambda$,
  the resulting Lagrangian relaxation is solved in a similar fashion as the GL type bound,
where costs in objectives of problems  \eqref{ze} and  \eqref{GLb} are:
\[
{q}_{ef}(\lambda) := q_{ef}- \sum_{i\in V, ~f \in \delta(i)} \lambda_{i,f} \qquad \forall e,f \in E, ~e\neq f
\]
and
\[
{q}_{ee}(\lambda) := q_{ee} + \sum_{i\in V, ~e \in \delta(i)}\lambda_{i,e} \qquad  \forall  e \in E,
\]
respectively.
After the solution of the Lagrangian relaxation with a given $\lambda$ is obtained, a  subgradient algorithm is implemented to update the  multiplier, and the process iterates.
Numerical results show that \"{O}ncan-Punnen  bounds provide, in general, stronger bounds than Assad-Xu bounds, see also \cite{Rostami2015LowerBF}.
In  \cite{ONCAN20101762}, the authors  also consider  replacing the condition $x \in \mathcal{T}$ by the multicommodity flow constraints from \cite{MAGNANTI1995503}, but
only for graphs up to 20 vertices.

\subsection{The  RLT type bounds} \label{sect:RLTold}

\citet{PEREIRA2015149} present a relaxation for the QMSTP that is  derived by applying the first level  reformulation linearization technique on \eqref{eq:QMST}.
Nevertheless, their numerical results are obtained by solving the following {\em incomplete}  first level RLT relaxation:
\begin{equation}\label{RLTu}
\begin{array}{rl}
 \widetilde{RTL_1}(Q):=\min & \sum\limits_{e,f \in E} q_{ef}y_{ef} \\[1.5ex]
{\rm s.t.} & y_{ef}=y_{fe} \qquad \forall  e,f \in E  \\
& \eqref{con GLBone} - \eqref{con GLBlast}.\\
\end{array}
\end{equation}
We denote by $\widetilde{RLT_1}$ the  relaxation \eqref{RLTu}.
To complete the above model to  the first level RLT formulation of the QMSTP, one needs to add the following constraints:
\begin{equation}\label{const:extra}
 \sum_{f \in E(S)}(x_f- y_{ef}) \leq (|S|-1)(1-x_e)  \qquad    \forall e \in E,~  \forall S \subset V, ~|S| \geq 2.
\end{equation}
It is written in  \cite{PEREIRA2015149}  that preliminary computational experiments show that  bounds obtained from the
(complete) first level RLT relaxation ($RLT_1$) do not significantly differ from  bounds obtained from the incomplete first level RLT relaxation.
However, computational effort to solve the full RLT relaxation significantly increases due to the constraints \eqref{const:extra}.
Therefore,  \citet{PEREIRA2015149}
compute only the incomplete first level RLT bound.

The (complete) first level RLT formulation for the QMSTP needs   not to include constraints:
\begin{eqnarray}
y_{ef} \leq x_e, \quad  y_{fe} \leq x_e, &  \forall e,f \in E, \label{econ:extra1} \\
y_{ef} \geq x_e+x_f -1, &  \forall e,f \in E, \label{econ:extra2}
\end{eqnarray}
since these are implied by the rest of the constraints. Namely, constraints \eqref{econ:extra1}--\eqref{econ:extra2}
readily follow by considering $S$ with two elements in \eqref{con leqy} and \eqref{const:extra}.
However, constraints \eqref{econ:extra2} are not implied by the constraints of the incomplete RLT relaxation  \eqref{RLTu}.
Nevertheless, \citet{PEREIRA2015149}  do not impose those constraints to the incomplete model for the same reason that constraints \eqref{const:extra} are not added;
that is that constraints \eqref{econ:extra2} do not significantly improve the value of the bound but are expensive to include.

To approximately solve  \eqref{RLTu},  \citet{PEREIRA2015149} dualize constraints $y_{ef}=y_{fe}$ ($ e,f \in E$) for a given Lagrange multiplier
and then apply the GL procedure. Then, they use a subgradient algorithm to derive a sequence of improved  multipliers and compute the corresponding bounds.
 The so obtained bounds converge to the optimal value of the incomplete first level RLT lower bound.
It is clear from the above discussion that the bound from \cite{PEREIRA2015149} is  related to the Gilmore-Lawler type bounds.

\citet{PEREIRA2015149} prove
\[
 P_{\widetilde{RLT_1}} \subseteq P_{OP} \subseteq P_{AX},
\]
where $P_{AX}$ denotes the convex hull of the   QMSTP formulation \eqref{AXexact},
$P_{OP}$ denotes the convex hull of \eqref{firstOP}--\eqref{lastOP} and \eqref{eq:EFVI1}--\eqref{eq:EFVI2},
and $P_{\widetilde{RLT_1}}$ the convex hull of the incomplete first level RLT relaxation   \eqref{RLTu}.
We remark that  in  \cite{PEREIRA2015149} the authors add to $P_{AX}$ and $P_{OP}$ constraints \eqref{con leqy} that are not in the
original description of those polyhedrons, see \cite{ONCAN20101762}.
Nevertheless, their result as well as the statement above are correct.

\begin{remark}
It is interesting to note that the continuous relaxation \eqref{MILP_GL} and the incomplete first level RLT relaxation differ only in
the symmetry constraints $y_{ef}=y_{fe}$ ($\forall e,f$) that are included in \eqref{RLTu}.
In other words, the dual of the relaxation $\widetilde{RLT_1}$ contains also   dual variables
say $\delta_{ef}$ and   $\delta_{fe}$  ($\forall e,f$), that correspond to the symmetry  constraints.
Thus, for each $e,f$ there is a constraint in the dual of the  incomplete first level RLT relaxation
that has a term $q_{ef}+(\delta_{ef}-\delta_{fe} )$.
Since $\delta_{ef}-\delta_{fe} = - (\delta_{fe}-\delta_{ef})$, this term corresponds to adding a skew symmetric matrix to the  cost matrix $Q$.
This implies that the optimal solution of the dual provides the best skew symmetric matrix that is added to the quadratic cost to improve the GL type bound.
Thus one can obtain the optimal solution for the incomplete first level RLT relaxation for the QMSTP in {\em one step} of  the GL bounding scheme.
\end{remark}

Further, in \cite{PEREIRA2015149} it is proposed a generalization of the  RLT relaxation that is based on decomposing spanning trees into forests of a fixed size.
The larger the size of the forest, the stronger the formulation.
The resulting relaxations are solved by using a Lagrangian relaxation scheme.
In \cite{Rostami2015LowerBF} it is  implemented a dual-ascent procedure to approximate the value of the (incomplete) level two RLT relaxation (${\widetilde{RLT_2}}$).

Since we couldn't find results on the first level RLT relaxation in the literature, we computed them for several instances.
In Table \ref{RLTbounds} we compare $GL$,  $\widetilde{RLT_1}$, $RLT_1$ and $\widetilde{RLT_2}$ bounds for  small CP1 instances.
For the description of instances see Section \ref{sect:nnumerics}. $UB$ stands for upper bounds.
Bounds for the (incomplete) first and second level RLT relaxation are from \cite{Rostami2015LowerBF}.
Table \ref{RLTbounds} shows that the GL type bounds are significantly weaker than the other bounds.
The results also show that difference between  $\widetilde{RLT_1}$ and $RLT_1$  bounds is not always small.
$\widetilde{RLT_2}$  bounds dominate all presented bounds in all but one case.
This discrepancy should be due to the way that those bounds are computed, i.e., by implementing  a dual ascent algorithm on the incomplete second level RLT relaxation.
For more such examples see \cite{Rostami2015LowerBF}.

\begin{table}[]
\caption{RLT type bounds}
\begin{center}
\begin{tabular}{rrr rrrr }
\hline \\
$n$ & $d$ & $UB$ & $GL$ &  $\widetilde{RLT1}$ &  $RLT_1$ &  $\widetilde{RLT_2}$   \\
\hline
10 & 33 & 350 & 299 & {\bf 350.0} & {\bf 350.0} & 344.1 \\
10 & 67 & 255 & 149 & {202.2} & 204.1 & {\bf 226.1} \\
15 & 33 & 745 & 445 & 578.2 & 603.3 & {\bf 637.8} \\
15 & 67 &  659 & 283 & 385.4 & 385.7 & {\bf 488.9} \\
20 & 33 & 1379 & 690 & 888.0 & 891.7 & {\bf 1056.7} \\
\hline
\end{tabular}\label{RLTbounds}
\end{center}
\end{table}

\section{New lower bounds for the QMSTP} \label{sect:newBounds}

\subsection{Linearization-based lower bounds} \label{sect:linBasedBnd}

\citet{Custic2015ACO} prove that the QMSTP on a complete graph  is linearizable if and only if its cost matrix  is a symmetric weak sum matrix.
In this section we exploit weak sum matrices to derive lower bounds for the QMSTP.  \citet{HuSotirov2} introduce the concept of linearization based bounds,
and show that many of the known bounding approaches including the GL type bounds are also linearization based bounds. De Meijer and Sotirov \cite{MeijerSotirov}
derive strong and efficient linearization  based bounds for the quadratic cycle cover problem.

A matrix $Q \in \mathbb{R}^{m\times m} $ is called a sum matrix if there exist $a,b  \in \mathbb{R}^{m}$ such
that $q_{ef} = a_e + b_f$ for all $e,f\in \{1,\ldots, m\}$. A weak sum matrix is a matrix for which this
property holds except for the entries on the diagonal, i.e., $q_{ef} = a_e + b_f$ for all $e \neq f$.
However for a symmetric weak sum matrix we have that vectors $a$ and $b$ are equal, i.e., $q_{ef} = a_e + a_f$  for all $e \neq f$.

It is proven in \cite[Theorem 5]{Custic2015ACO} that a  symmetric cost matrix $Q$ of the QMSTP on a complete graph is
linearizable if and only if it is a symmetric weak sum matrix. In particular, the authors show that for a symmetric weak sum matrix
of the form $q_{ef} = a_e + a_f$ and a tree  $T$ in a complete graph one has:
\[
\begin{array}{rcl}
\sum\limits_{e\in T} \sum\limits_{f\in T}  q_{ef}
 &=& \sum\limits_{e\in T} \sum\limits_{\substack{f \in T \\ f \neq e}}(a_e + a_f) + \sum\limits_{e\in T}q_{ee}\\[4ex]
 &=& 2(n-2) \sum\limits_{e\in T} a_e +  \sum\limits_{e\in T}q_{ee}\\
  &=&  \sum\limits_{e\in T} p_e,
\end{array}
\]
where $p \in \mathbb{R}^m$ with
\begin{equation}\label{pe}
p_e= 2(n-2)a_e+q_{ee}
\end{equation}
  is a linearization vector. Thus, solving the QMSTP in which  the cost matrix is  a symmetric weak sum matrix
  corresponds to solving a minimum spanning tree problem.

In \cite{HuSotirov2,MeijerSotirov} it is proven that one can obtain a lower bound for the optimal value $OPT(Q)$
of a  minimization quadratic optimization problem
with the cost matrix $Q$ from a linearization matrix  $\hat{Q}$, which satisfies $Q\geq \hat{Q}$.  In particular, we have:
\begin{align*}
OPT(Q) = \min_{x \in X}\{x^{\mathrm T} Qx\} \geq \min_{x \in X}\{x^{\mathrm T}\hat{Q}x\} = \min_{x \in X}\{x^{\mathrm T} \hat{p}\} = OPT(\hat{p}),
\end{align*}
where $\hat{p}$ is a linearization vector of $\hat{Q}$,  $X$ is the feasible set  of the optimization problem, and $OPT(\hat{p})$ is the optimal
solution of the corresponding linear problem.

Thus, for a weak sum matrix $\hat{Q}$ such that  $Q\geq \hat{Q}$ and the corresponding linearization vector $\hat{p}$,
the optimal solution for the MSTP  \eqref{eq:MSTP} with the cost vector $\hat{p}$ is a lower bound for the QMSTP  \eqref{eq:QMST}.
By maximizing the right hand side in the above inequality
over all linearization vectors of the form \eqref{pe}, one obtains the strongest linearization based bound for the QMSTP.
Therefore, the strongest linearization based lower bound for the QMST is the optimal solution of the following problem:
\begin{subequations}
\begin{align}
LBB(Q) := \max & ~~{-(n-1)\epsilon - \sum_{i=1}^n \sum_{\substack{j=1 \\ j\neq i}}^n \mu_{ij}} & \label{obj:linModel}\\
{\rm s.t.} & ~~ a_e + a_f  \leq q_{ef}  & \forall  e,f \in E, ~e\neq f \label{linConst} \\
 & ~~  \sum_{k=1}^n \theta_{ke} - \epsilon ~\leq 2(n-2)a_e+q_{ee} &\forall e \in E \label{consrEps1} \\
 & \mu_{ki} + \sum_{e \in E(\{i,j\})} \theta_{ke} ~\geq 0& k,i,j = 1,\dots,n \label{const:muSum1}\\
&  \mu_{ki} \geq 0 &\ k,i = 1,\dots,n  \label{const:mu1} \\
& \epsilon\in \mathbb{R}, ~~ \theta_{ke} \in \mathbb{R}, ~~ a_e\in \mathbb{R}   &   k= 1,\dots,n, ~e\in E.
\end{align}  \label{inBasedBoundl1}
\end{subequations}
We denote the linear programming  relaxation  \eqref{inBasedBoundl1} by $LBB$.
The $LBB$ is derived from the dual of the extended formulation of the MSTP, see \eqref{DualextendModel}.
Note that in computations one can exploit the fact that $Q=Q^{\mathrm T}$ to reduce the number of constraints in \eqref{linConst}.
 In the next section we relate relaxation \eqref{inBasedBoundl1} with a relaxation obtained from the  linear formulation of the QMSTP.

\subsection{Strong and efficient lower bounds} \label{sec:newLower}

The QMSTP formulation  \eqref{AXexact} is introduced in \cite{AssadXu}.
We exploit that QMSTP formulation to derive a sequence  of linear relaxations for the QMSTP.
First, by replacing $\mathcal{T}$ by $\mathcal{T}_{E}$, and adding the symmetry constraints $y_{ef}=y_{fe}$ ($\forall e,f$),
we obtain the following relaxation for the QMSTP:
\begin{subequations}
\begin{align}
VS_0(Q) := \min &  \sum_{e,f \in E} q_{ef} y_{ef} \\
{\rm s.t. }  &  \sum_{f \in E} y_{ef} = (n-1) x_e & \forall e \in E \label{AX2first}\\
&  y_{ee} = x_e  &  \forall e \in E  \\
&  y_{ef} =y_{fe} &  \forall e,f \in E \label{AX2sym} \\
&   (x,z) \in \mathcal{T}_{E} \label{AX2last} \\
&     0 \leq y_{ef} \leq 1 & \forall e, f \in E . \label{AX2lastY}
\end{align}\label{newmodelAX}
\end{subequations}
We denote by $VS_0$ the above relaxation.
In the above optimization problem, we  omit constraints:
\[
 \sum_{e \in E} y_{ef} = (n-1) x_f  \qquad  \forall f \in E,
\]
since they are implied by constraints \eqref{AX2first} and \eqref{AX2sym}.
Next, we compare the relaxation \eqref{newmodelAX} and the  strongest linearization based relaxation  \eqref{inBasedBoundl1}.
Let us first present the dual of the relaxation \eqref{newmodelAX}:
\begin{subequations}
\begin{align}
\max & ~~{-(n-1)\epsilon - \sum_{i=1}^n \sum_{\substack{j=1 \\ j\neq i}}^n \mu_{ij}} &  \label{objdual2}\\
 {\rm s.t.} & ~~ \alpha_e  + \gamma_e \geq  0 & \forall e \in E \label{const:gammaAlpha} \\
 & ~~ -\alpha_e  - \delta_{ef} +\delta_{fe} \leq   q_{ef} & \forall  e,f \in E, ~~e\neq f  \label{const:alphaDeltQ} \\
 & ~~  \sum_{k=1}^n \theta_{ke} + (n-1)\alpha_e+\gamma_e - \epsilon  ~\leq q_{ee} & \forall  e \in E \label{consrEps2} \\
 & ~~ \mu_{ki} + \sum_{e \in S(\{i,j\})} \theta_{ke} ~\geq 0 & k,i,j = 1,\dots,n  \label{const:muSum2} \\
& ~~ \mu_{ki} \geq 0 & k,i = 1,\dots,n  \label{const:mu2} \\
& ~~ \epsilon\in \mathbb{R}, ~~ \theta_{ke}, \alpha_e,  \gamma_e, \delta_e \in \mathbb{R}      & k= 1,\dots,n, ~e\in E.
\end{align}  \label{DualextendModel2}
\end{subequations}
To derive the above  dual problem we remove upper bounds on $x_e$ and $y_{ef}$ in the corresponding primal problem,
 since it is never optimal to set the values of these variables larger than one.
The following result shows that the strongest linearization based bound equals the bound obtained by solving \eqref{newmodelAX}, or equivalently \eqref{DualextendModel2}.

\begin{theorem} \label{prop:equivalent}
Let $G$ be a complete graph.
Then, optimization problems  \eqref{inBasedBoundl1} and \eqref{DualextendModel2} are equivalent.
\end{theorem}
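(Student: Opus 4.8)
The plan is to prove the equivalence directly, by exhibiting an explicit, objective-preserving correspondence between feasible points of the two programs, rather than invoking LP duality a second time. The starting observation is that \eqref{inBasedBoundl1} and \eqref{DualextendModel2} are both maximization problems with the \emph{same} objective $-(n-1)\epsilon - \sum_{i}\sum_{j\neq i}\mu_{ij}$, which depends only on $(\epsilon,\mu)$, and that they carry literally identical constraints coupling $\mu$ and $\theta$, namely \eqref{const:muSum1}--\eqref{const:mu1} on the one side and \eqref{const:muSum2}--\eqref{const:mu2} on the other (the ``$S(\{i,j\})$'' in \eqref{const:muSum2} being a typo for $E(\{i,j\})$). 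Consequently it suffices to show that the set of vectors $(\epsilon,\theta)$ that admit a completion to a feasible point is the same for both programs: any feasible point of one can then be matched by a feasible point of the other with identical $(\epsilon,\theta,\mu)$, hence identical objective value, and the two optima coincide. Thus the whole argument reduces to reconciling the remaining constraints \eqref{linConst}, \eqref{consrEps1} with \eqref{const:gammaAlpha}, \eqref{const:alphaDeltQ}, \eqref{consrEps2}.

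For the direction from \eqref{inBasedBoundl1} to \eqref{DualextendModel2} I would take a feasible $(a,\theta,\epsilon,\mu)$ and set $\alpha_e := -2a_e$, $\gamma_e := 2a_e$, and choose the free variables so that $\delta_{fe}-\delta_{ef} = a_f - a_e$ (for instance $\delta_{ef} := -a_f$), keeping $(\theta,\epsilon,\mu)$ unchanged. Then \eqref{const:gammaAlpha} holds with equality since $\alpha_e+\gamma_e=0$; the diagonal inequality \eqref{consrEps2} becomes $\sum_k \theta_{ke} + (n-1)\alpha_e+\gamma_e-\epsilon = \sum_k\theta_{ke} - 2(n-2)a_e - \epsilon \le q_{ee}$, which is exactly \eqref{consrEps1}; and the off-diagonal inequality \eqref{const:alphaDeltQ} reads $-\alpha_e + (\delta_{fe}-\delta_{ef}) = 2a_e + (a_f-a_e) = a_e+a_f \le q_{ef}$, which is precisely \eqref{linConst}.

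For the reverse direction I would take a feasible $(\alpha,\gamma,\delta,\theta,\epsilon,\mu)$ of \eqref{DualextendModel2} and set $a_e := -\tfrac12\alpha_e$, again leaving $(\theta,\epsilon,\mu)$ fixed. The key step is recovering the symmetric weak-sum inequality \eqref{linConst}: writing $s_{ef} := \delta_{fe}-\delta_{ef}$, which is skew-symmetric by construction, constraint \eqref{const:alphaDeltQ} gives $-\alpha_e + s_{ef}\le q_{ef}$ and, applied to the ordered pair $(f,e)$ together with $q_{fe}=q_{ef}$, also $-\alpha_f - s_{ef}\le q_{ef}$; adding these makes the skew term cancel and yields $-\tfrac12\alpha_e-\tfrac12\alpha_f\le q_{ef}$, i.e.\ $a_e+a_f\le q_{ef}$. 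The diagonal inequality \eqref{consrEps1} then follows from \eqref{consrEps2} by bounding $(n-1)\alpha_e+\gamma_e \ge (n-2)\alpha_e$ via $\gamma_e\ge-\alpha_e$ from \eqref{const:gammaAlpha}, which gives $\sum_k\theta_{ke}-\epsilon \le q_{ee}-(n-2)\alpha_e = q_{ee}+2(n-2)a_e$. Together the two directions show that the $(\epsilon,\theta)$-projections agree, so the optimal values are equal.

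I expect the crux of the argument to be exactly this cancellation of the skew-symmetric part. Program \eqref{DualextendModel2} carries the row-indexed quantity $-\alpha_e$ together with the free antisymmetric correction $\delta_{fe}-\delta_{ef}$, whereas \eqref{inBasedBoundl1} carries only the symmetric weak-sum quantity $a_e+a_f$; reconciling them amounts to recognizing that the antisymmetric freedom in $\delta$ is precisely what converts the asymmetric form into the symmetric one, since averaging a constraint with its transpose kills the skew part, and conversely a skew term (here $s_{ef}=a_f-a_e$) can always be inserted to realize any asymmetric split of the symmetric bound. This is the same phenomenon already noted in the remark in Section~\ref{sect:RLTold}. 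The remaining ingredients are routine: the scaling $\alpha_e=-2a_e$ and the role of $\gamma_e$ in producing the factor $(n-2)$. I would also double-check where, if anywhere, the hypothesis that $G$ is complete is genuinely needed; the algebraic correspondence above appears to use only $Q=Q^{\mathrm T}$ and the common index set $E$, so completeness seems to enter only through the derivation of \eqref{inBasedBoundl1} itself rather than through the equivalence.
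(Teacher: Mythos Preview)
Your proof is correct and follows essentially the same route as the paper's: both directions keep $(\epsilon,\mu,\theta)$ fixed, set $\alpha_e=-2a_e$, $\gamma_e=2a_e$ in the forward direction, and use the skew-symmetry cancellation of the $\delta$-terms to recover \eqref{linConst} from \eqref{const:alphaDeltQ}. The only (cosmetic) difference is in the reverse direction: the paper sets $\hat a_e:=-\tfrac{1}{2(n-2)}\bigl((n-1)\alpha_e+\gamma_e\bigr)$, making \eqref{consrEps1} exact and then deriving $\hat a_e\le -\alpha_e/2$ from \eqref{const:gammaAlpha} to handle \eqref{linConst}, whereas you set $a_e:=-\tfrac12\alpha_e$, making the off-diagonal step immediate and using \eqref{const:gammaAlpha} instead for the diagonal inequality; both choices are valid and your explicit $\delta_{ef}=-a_f$ is a nice concretization of the paper's existential argument.
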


\proof We show that  for every feasible solution for \eqref{DualextendModel2},
 we can find a feasible solution for \eqref{inBasedBoundl1} with the same objective value, and conversely
 for every feasible solution for \eqref{inBasedBoundl1}   we can find a feasible solution for  \eqref{DualextendModel2} with the same objective value.

Let ($\alpha$,$\gamma$,$\delta$,$\epsilon$,$\mu$,$\theta$) be a feasible solution for the optimization problem \eqref{DualextendModel2}.
Let us find a feasible solution ($\hat{a}$,$\hat{\epsilon}$,$\hat{\mu}$,$\hat{\theta}$) for \eqref{inBasedBoundl1}.
We define  $\hat{\epsilon} := \epsilon$, $\hat{\mu} := \mu$ and $\hat{\theta} := \theta$.
Thus, the objective values \eqref{objdual2} and  \eqref{obj:linModel}   are equal,
 and constraints \eqref{const:muSum1} (resp., \eqref{const:mu1})  correspond to constraints  \eqref{const:muSum2} (resp., \eqref{const:mu2}).

Let us define $\hat{a}_e := - \frac{1}{2(n-2)}((n-1)\alpha_e + \gamma_e )$  for $e \in E$
and note that
\[
\hat{a}_e = - \frac{(n-1)\alpha_e}{2(n-2)} - \frac{\gamma_e}{2(n-2)} \leq - \frac{(n-1)\alpha_e}{2(n-2)} + \frac{\alpha_e}{2(n-2)} = - \frac{\alpha_e}{2},
\]
where $\alpha_e \geq -\gamma_e$ follows from \eqref{const:gammaAlpha}.
Further from \eqref{const:alphaDeltQ} we have that
$$
\begin{array}{c}
    -\alpha_e - (\delta_{ef} - \delta_{fe}) \leq q_{ef} \\
    -\alpha_f - (\delta_{fe} - \delta_{ef}) \leq q_{fe},
\end{array}
$$
for every pair of edges  $(e,f)$ such that $e \neq f$.
By adding these two inequalities, we obtain $-(\alpha_e + \alpha_f) \leq q_{ef} + q_{fe}$.
 However, the inequality $\hat{a}_e \leq -\frac{\alpha_e}{2}$ implies
 $\hat{a}_e + \hat{a}_f \leq - \frac{1}{2}(\alpha_e + \alpha_f) \leq \frac{1}{2}(q_{ef} + q_{fe})=q_{ef}$  for every pair $(e,f)$ such that $e \neq f$.
 Note that we assumed that $Q = Q^{\mathrm T}$.
 Thus  constraints \eqref{linConst} are also satisfied.
 To show that constraints \eqref{consrEps1} are satisfied, we use \eqref{consrEps2} from where it follows:
 \[
 \sum_{k=1}^n \theta_{ke}  - \epsilon  ~\leq q_{ee}  - (n-1)\alpha_e - \gamma_e = q_{ee} + 2(n-2) \hat{a}_e \qquad \forall e\in E.
 \]

 Conversely, let ($\hat{a}$,$\hat{\epsilon}$,$\hat{\mu}$,$\hat{\theta}$) be a feasible solution for \eqref{inBasedBoundl1}.
Below, we  construct a feasible solution ($\alpha$,$\gamma$,$\delta$,$\epsilon$,$\mu$,$\theta$) for  \eqref{DualextendModel2}.
 We define $\epsilon := \hat{\epsilon}$, $\mu := \hat{\mu}$ by $\theta := \hat{\theta}$.
 Thus, we have that the objective values \eqref{objdual2} and  \eqref{obj:linModel}   are equal,
 and constraints \eqref{const:muSum1} (resp., \eqref{const:mu1})  correspond to constraints  \eqref{const:muSum2} (resp., \eqref{const:mu2}).
We define $\alpha_e := -2 \hat{a}_e$ and $\gamma_e := 2 \hat{a}_e,$ for all $e \in E$, from where it follows
$\alpha_e + \gamma_e = 0$. Thus, constraint \eqref{const:gammaAlpha} is satisfied.

From the definitions of $\alpha$ and $\gamma$ we have  $ -(n-1) \alpha_e - \gamma_e= 2(n-2) \hat{a}_e $,
and from \eqref{consrEps1} it follows
\[
 \sum_{k=1}^n \theta_{ke} - \epsilon ~\leq  q_{ee} +  2(n-2)\hat{a}_e = q_{ee} -(n-1) \alpha_e - \gamma_e \qquad e\in E.
\]
Thus,  for each $e\in E$ constraint \eqref{consrEps2} is satisfied.

It remains to verify constraints \eqref{const:alphaDeltQ}. For that purpose we consider constraints \eqref{linConst} for
a pair of edges $(e,f)$ such that $e \neq f$.
After multiplying the constraint with two, and using $q_{ef}=q_{fe}$  and $\alpha_e = -2 \hat{a}_e$ we obtain
\[
-{\alpha}_e - q_{ef} \leq q_{fe} + {\alpha}_f.
\]
Thus, there exist $\delta_{ef}$ and $\delta_{fe}$ such that
$$
\begin{array}{c}
    - \alpha_e - q_{ef}   \leq  (\delta_{ef} - \delta_{fe}) \\[1ex]
     (\delta_{ef} - \delta_{fe}) \leq q_{fe} + \alpha_f.
\end{array}
$$
This finishes the proof.  \qed

Theorem \ref{prop:equivalent} shows that the linearization based relaxation for the QMSTP is equivalent to the relaxation of an exact linear formulation for the QMSTP.
Our preliminary numerical results show that the bound $VS_0(Q)$ is not dominated by $GL(Q)$, or vice versa.

To strengthen the relaxation   $VS_0$, see \eqref{newmodelAX}, one can add the following facet defining
inequalities of the Boolean Quadric Polytope, see e.g., \cite{Padberg}:
\begin{subequations}
\begin{align}
0 \leq y_{ef} \leq x_e  \label{bqp1}\\[1ex]
x_{e} + x_{f} \leq 1 + y_{ef} \label{bqp2} \\[1ex]
y_{eg} + y_{fg} \leq x_{g} + y_{ef}  \label{bqp3} \\[1ex]
x_e + x_{f} + x_{g} \leq y_{ef} + y_{eg} + y_{fg} +1, \label{bqp4}
\end{align} \label{BQP}
\end{subequations}
where $e,f,g \in E$, $e\neq f\neq g$.
Table \ref{VS_relaxations} introduces relaxations with increasing complexity that are obtained
by adding subsets of the BQP inequalities  to  the linear program  \eqref{newmodelAX}.

\begin{table}[h!]
\begin{center}
\caption{VS relaxations}
\begin{tabular}{|c|c|c|}
\hline
name & constraints & complexity \\\hline\hline
$VS_0$ & \eqref{AX2first} -- \eqref{AX2lastY} & ${\mathcal O}(n^3+m^2)$\\\hline
$VS_1$ & \eqref{AX2first} -- \eqref{AX2last}, \eqref{bqp1} -- \eqref{bqp2} & ${\mathcal O}(n^3+ m^2 )$\\\hline
$VS_2$ & \eqref{AX2first} -- \eqref{AX2last}, \eqref{bqp1} -- \eqref{bqp4} & ${\mathcal O}(n^3+m^3)$\\\hline
\end{tabular}\label{VS_relaxations}
\end{center}
\end{table}
To solve the relaxations $VS_1$ and $VS_2$  we use a cutting plane scheme that iteratively adds  the most violated inequalities,
see section on numerical results for more details.

The following result relates  $\widetilde{RLT_1}$ and $VS_1$ relaxations.
\begin{proposition} \label{compareRLTVS1}
The  relaxation  $VS_1$ is not dominated by  the incomplete first level RLT relaxation \eqref{RLTu}, or vice versa.
\end{proposition}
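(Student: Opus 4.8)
The plan is to prove the two non\nobreakdash-domination statements separately and to reduce each of them to a question about the feasible regions rather than about specific optimal values. Write $P_{\widetilde{RLT_1}}$ for the feasible region of \eqref{RLTu} and $P_{VS_1}$ for that of \eqref{newmodelAX} together with \eqref{bqp1}--\eqref{bqp2}, both viewed in the space of the $(x,y)$ variables. Since the extended formulation $\mathcal{T}_E$ projects exactly onto the spanning tree polytope $\mathcal{T}_{\mathcal R}$, the auxiliary variables $z$ play no role here: $P_{VS_1}$ is simply the set of $(x,y)$ with $x \in \mathcal{T}_{\mathcal R}$ meeting the $VS_1$ constraints on $y$, and both relaxations share the same $x$\nobreakdash-polytope. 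The enabling observation is that the QMSTP objective $\sum_{e,f} q_{ef} y_{ef}$, with $Q = Q^{\mathrm T}$ and $y_{ee}=x_e$, can realise an arbitrary symmetric linear functional of $(x,y)$: a coefficient on $x_e$ is encoded in $q_{ee}$ and a coefficient on $y_{ef}$ in $q_{ef}=q_{fe}$. By the standard separating\nobreakdash-inequality argument, if $P_{\widetilde{RLT_1}} \not\subseteq P_{VS_1}$ then some cost matrix makes $VS_1$ strictly stronger than $\widetilde{RLT_1}$, and symmetrically if $P_{VS_1} \not\subseteq P_{\widetilde{RLT_1}}$ then some cost matrix makes $\widetilde{RLT_1}$ strictly stronger than $VS_1$. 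Establishing both non\nobreakdash-inclusions therefore proves the proposition.

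Next I would pin down the two differentiating constraint families. Because the relaxations agree on \eqref{AX2first}, on $y_{ee}=x_e$ and on the symmetry \eqref{AX2sym}, the difference lies entirely in the remaining $y$\nobreakdash-inequalities. On one side, $VS_1$ imposes \eqref{bqp2}, i.e.\ $y_{ef} \ge x_e + x_f - 1$, which is exactly the constraint \eqref{econ:extra2} already noted not to be implied by $\widetilde{RLT_1}$. On the other side, $\widetilde{RLT_1}$ imposes the full family of projected subtour constraints \eqref{con leqy}, namely $\sum_{f \in E(S)} y_{ef} \le (|S|-1)x_e$ for every $S$, whereas $VS_1$ retains only their $|S|=2$ instances (these coincide with \eqref{bqp1}) and drops the inequalities for $|S| \ge 3$, controlling subtours of $x$ instead through membership in $\mathcal{T}_E$. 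I would exhibit one witness point for each gap on a small complete graph, taking $G=K_5$, which is large enough to display both phenomena (note that for $|S|=3$ the subtour inequality is non\nobreakdash-redundant only when $|S|<n-1$, hence $n\ge 5$).

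For $P_{\widetilde{RLT_1}} \not\subseteq P_{VS_1}$ I would build a point whose $x$ is an integral spanning tree containing two edges $e,f$, so $x_e=x_f=1$, but with $y_{ef}=0$. This is feasible for $\widetilde{RLT_1}$ because the constraints \eqref{con GLBone}--\eqref{con leqy} force each row $(y_{ef'})_{f'}$ to lie in $\mathcal{T}_{\mathcal R}$ with its $e$\nobreakdash-th coordinate equal to one, and one may realise row $e$ using spanning trees that contain $e$ but avoid $f$ (and row $f$ using trees containing $f$ but avoiding $e$); it violates \eqref{bqp2} since $x_e+x_f-1=1>0=y_{ef}$. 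Minimising $y_{ef}-x_e-x_f$ then yields value $-2$ over $P_{\widetilde{RLT_1}}$ but at least $-1$ over $P_{VS_1}$, separating the bounds. For $P_{VS_1} \not\subseteq P_{\widetilde{RLT_1}}$ I would take $S$ a three\nobreakdash-element vertex set, an edge $e\notin E(S)$, set $x_e=\tfrac12$ and $y_{ef}=\tfrac12$ for the three edges $f\in E(S)$ (with the matching $x_f\ge\tfrac12$ so that $y_{ef}\le\min(x_e,x_f)$ and \eqref{bqp2} hold), all remaining entries of row $e$ being zero so that its sum is $(n-1)x_e=2$. Then $\sum_{f\in E(S)} y_{ef}=\tfrac32>1=(|S|-1)x_e$ violates \eqref{con leqy}, while $x\in\mathcal{T}_{\mathcal R}$ remains possible since $\sum_{f\in E(S)} x_f\le 2$. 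Minimising $-\sum_{f\in E(S)} y_{ef}$ is then strictly smaller over $P_{VS_1}$ than over $P_{\widetilde{RLT_1}}$, where the sum is capped at $(|S|-1)x_e$.

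The main obstacle is the feasibility bookkeeping for the two witness points rather than the separation principle itself. In each case one must complete the partially specified $y$ to a full symmetric matrix that simultaneously satisfies the row\nobreakdash-sum equations \eqref{AX2first}, the symmetry \eqref{AX2sym}, the box and BQP constraints, together with a consistent $x\in\mathcal{T}_{\mathcal R}$, while keeping feasible all the other inequalities and violating only the single intended one (\eqref{con leqy} for the $\widetilde{RLT_1}$\nobreakdash-exclusion, and \eqref{bqp2} for the $VS_1$\nobreakdash-exclusion). Because the symmetry constraint couples every pair of rows and each row is itself pinned to the spanning tree polytope, choosing the off\nobreakdash-diagonal entries coherently is delicate; restricting to $K_5$ keeps the constraint system finite and small, so the two completions can be written down and checked explicitly, which is why I would carry out the construction on a fixed small graph rather than in general.
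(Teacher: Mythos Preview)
Your overall plan is sound: reducing non-domination to polytope non-inclusion via the observation that any symmetric linear functional of $(x,y)$ can be realised as a QMSTP objective is correct, and it is a cleaner framing than the paper's, which simply exhibits one numerical point on $K_6$ and then appeals to computational tables for the converse. Your second construction (violating the projected subtour constraint \eqref{con leqy} with $|S|=3$) is headed in the right direction, though you have not actually completed the symmetric matrix $Y$ and verified all $VS_1$ constraints; the paper sidesteps this entirely by citing numerical results.

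However, your first construction is wrong, and the error is exactly the kind of coupling you flag as ``delicate'' but then underestimate. If $x$ is the incidence vector of an integral spanning tree $T$ and $(x,Y)$ is feasible for $\widetilde{RLT_1}$, then $Y$ is forced to equal $xx^{\mathrm T}$, so \eqref{bqp2} can never be violated. Here is why: for any edge $g\notin T$ we have $x_g=0$, so $\sum_f y_{gf}=(n-1)x_g=0$ together with $y\ge 0$ forces row $g$ to vanish; by symmetry $y_{eg}=0$ for every $e$. Hence for $e\in T$ the row sum $\sum_f y_{ef}=n-1$ is carried entirely on the $n-1$ tree edges, while $y_{ef}\le x_e=1$ from \eqref{con leqy} with $|S|=2$. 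This forces $y_{ef}=1$ for all $f\in T$, i.e.\ $Y=xx^{\mathrm T}$, and then $x_e+x_f-1\le x_e x_f=y_{ef}$ holds automatically. So ``row $e$ a spanning tree containing $e$ but avoiding $f$'' is not available: the zero columns for non-tree edges, imposed by symmetry, leave no slack. This is precisely why the paper's witness on $K_6$ uses a \emph{fractional} $x$; you will need to do the same.
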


\begin{proof}
We consider a feasible solution $(x,Y)$  of \eqref{RLTu} for an instance on complete graph with $6$ vertices.
Let $x:=( 0.75,  0.75, 0, 0,  0,  0,  0,  0,  0.75,  1,  0,  0.75,  1,  0,  0 )^{\mathrm T}$ and define the $15 \times 15$ symmetric matrix $Y$
whose diagonal elements correspond to  the elements of vector $x$. Further, elements on the  positions $y_{ef}=y_{fe}$ where
\[
\begin{array}{c|cccccccccccc}
e & 1&     2  &   1   &  2 &    9&     1  &   2 &   10&     1 &    2   &  9 &   12 \\    \hline
f & 9&     9  &  10   & 10 &   10&    12  &  12 &   12&    13 &   13   & 13 &   13
\end{array}
\]
equal $0.75$, while $y_{10,13}=1$, and all other elements are zero. By direct verification it follows that $(x,Y)$ is feasible for   \eqref{RLTu}.
On the other hand, we have that  $1 + y_{12} - y_{11} - y_{22} < 0$. Thus, \eqref{bqp2} are violated and $(x,Y)$ is not feasible for $(VS_1)$.
 The (incomplete) RLT bound for this particular instance is 386.5, while $VS_1(Q)=372.4$

Conversely, in Section \ref{sect:nnumerics} we  provide examples where the optimal values of $VS_1$
are strictly greater than the optimal values of $\widetilde{RLT}_1$.
\end{proof}
The  results of the previous proposition  are not surprising since constraints \eqref{bqp2} are not included in the relaxation $\widetilde{RLT_1}$.
It is not difficult to show the following results.
\begin{corollary} \label{compareRLTVS1}
The  relaxation  $VS_1$ is  dominated by  the first level RLT relaxation for the QMSTP.
\end{corollary}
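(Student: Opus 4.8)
The plan is to establish the domination by a feasible-region containment argument: I will show that the projection of the feasible set of the complete first level RLT relaxation onto the variables $(x,y)$ is contained in the feasible set of $VS_1$. Since both relaxations minimize the identical linear objective $\sum_{e,f\in E} q_{ef} y_{ef}$, minimizing over the smaller $RLT_1$ feasible set yields a value at least as large as minimizing over the $VS_1$ set, which is exactly the assertion that $VS_1$ is dominated by $RLT_1$.

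To carry this out, I would take an arbitrary $(x,y)$ feasible for the complete first level RLT relaxation, i.e., a point satisfying the constraints of $\widetilde{RLT_1}$ in \eqref{RLTu} together with \eqref{const:extra}, and verify each defining constraint of $VS_1$ (that is, \eqref{AX2first}--\eqref{AX2last} and \eqref{bqp1}--\eqref{bqp2}). Several constraints coincide verbatim: \eqref{AX2first} is \eqref{con GLBone}, $y_{ee}=x_e$ is \eqref{yeexee}, the symmetry \eqref{AX2sym} is already imposed in \eqref{RLTu}, and $y_{ef}\ge 0$ is present, supplying the lower bound in \eqref{bqp1}.

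The two places requiring a short argument are the remaining BQP cuts and the membership \eqref{AX2last}. For the cuts I would invoke the implications already recorded in the excerpt: the upper bound $y_{ef}\le x_e$ in \eqref{bqp1} is \eqref{econ:extra1}, obtained from \eqref{con leqy} by taking $S$ to be the pair of endpoints of $f$ (the $|S|=2$ case, for which $E(S)=\{f\}$), while \eqref{bqp2}, namely $x_e+x_f\le 1+y_{ef}$, is \eqref{econ:extra2}, obtained from \eqref{const:extra} for the same choice of $S$. For \eqref{AX2last}, I would use that $x\in\mathcal{T}_{\mathcal R}$ holds in $\widetilde{RLT_1}$ and that $\mathcal{T}_E$ is an extended formulation of the spanning tree polytope whose projection onto the $x$-variables equals $\mathcal{T}_{\mathcal R}$; hence there exists $z$ with $(x,z)\in\mathcal{T}_E$, completing the lift. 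Collecting these verifications shows that $(x,y)$, together with such a $z$, is feasible for $VS_1$ with the same objective value, which proves the containment and hence the corollary.

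I do not expect a genuine obstacle here; the only point to watch is that the argument essentially consumes the extra constraints \eqref{const:extra} of the complete relaxation, which is precisely what distinguishes this corollary from the preceding proposition. Indeed, the incomplete relaxation $\widetilde{RLT_1}$ lacks \eqref{const:extra}, so it cannot imply \eqref{bqp2}, and that is exactly why only the complete $RLT_1$—and not $\widetilde{RLT_1}$—dominates $VS_1$.
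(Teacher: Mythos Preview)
Your proposal is correct and matches the paper's own justification, which simply notes that the corollary ``follows from the discussion in Section~\ref{sect:RLTold}''---namely, that \eqref{econ:extra1} and \eqref{econ:extra2} (i.e., \eqref{bqp1}--\eqref{bqp2}) are implied by \eqref{con leqy} and \eqref{const:extra} with $|S|=2$, together with the extended-formulation lift of $x\in\mathcal{T}_{\mathcal R}$ to $(x,z)\in\mathcal{T}_E$. Your write-up merely makes these implications explicit.
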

\begin{corollary} \label{compareRLTVS2}
The  relaxation  $VS_2$ is not dominated by  the first level RLT relaxation for the QMSTP, or vice versa.
\end{corollary}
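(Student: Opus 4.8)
The plan is to prove that the feasible regions of $VS_2$ and of the complete first level RLT relaxation $RLT_1$ are mutually non-inclusive, and then to convert each non-inclusion into a strict gap of optimal values, exactly mirroring the proof of the previous proposition one level up. The two relaxations differ cleanly: $RLT_1$, being $\widetilde{RLT_1}$ augmented with \eqref{const:extra}, carries the full family of product subtour constraints \eqref{con leqy} and \eqref{const:extra} for every $S$ with $|S|\ge 2$, whereas $VS_2$ only enforces the row sums \eqref{AX2first}, the extended-formulation membership $(x,z)\in\mathcal{T}_E$, and the BQP cuts \eqref{bqp1}--\eqref{bqp4}. Hence $VS_2$ carries the triangle cuts \eqref{bqp3}--\eqref{bqp4}, which $RLT_1$ does not enforce and which are not implied by the ``star-shaped'' subtour constraints, while $RLT_1$ carries the product subtour constraints for $|S|\ge 3$, which $VS_2$ does not enforce (for $|S|=2$ these reduce to \eqref{bqp1}--\eqref{bqp2} and are present in both). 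I would first record these two structural observations, since they pinpoint precisely which constraint each counterexample must violate.

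For the direction that $VS_2$ is not dominated by $RLT_1$, I would exhibit a small complete-graph instance together with a point $(x,Y)$ satisfying every $RLT_1$ constraint but violating one of \eqref{bqp3} or \eqref{bqp4}, so that $(x,Y)\notin VS_2$; verifying $RLT_1$-feasibility reduces to checking \eqref{con leqy} and \eqref{const:extra} for all $S$, together with symmetry and $x\in\mathcal{T}_{\mathcal R}$, exactly as in the previous proposition. Choosing the objective $Q$ so that this point is the unique minimizer over the $RLT_1$ polytope then forces $VS_2(Q)>RLT_1(Q)$.

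For the reverse direction, I would exhibit an instance and a point $(x,Y)$ feasible for $VS_2$ — in particular satisfying all four BQP families \eqref{bqp1}--\eqref{bqp4} and admitting $x\in\mathcal{T}_E$ — but violating a product subtour constraint \eqref{con leqy} or \eqref{const:extra} for some $S$ with $|S|=3$; such a point cannot be excluded by the ``local'' triangle cuts, which only constrain edge triples and not the quantity $\sum_{f\in E(S)}y_{ef}$ for a fixed first edge $e$. A matching objective then yields $RLT_1(Q)>VS_2(Q)$. Alternatively, the experiments in Section~\ref{sect:nnumerics} already report instances on which $VS_2$ and $RLT_1$ straddle in both directions, and the converse could simply be settled by reference to those numerics, just as the converse in the previous proposition is.

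The main obstacle is not the structural part — incomparability of the two constraint sets is immediate — but upgrading a mere feasibility separation into a strict inequality of optimal values. For this I must ensure the separating point is an extreme point of the larger polytope and construct an objective selecting it as the unique optimum, or else compute both bounds on one concrete instance and read off the gap, as is done elsewhere in the paper (e.g.\ the values $386.5$ versus $372.4$). A secondary technical point concerns the extended variables $z$: when producing a point on the $VS_2$ side I must confirm the chosen $x$ admits a completion with $(x,z)\in\mathcal{T}_E$, which is guaranteed because $\mathcal{T}_E$ projects onto $\mathcal{T}_{\mathcal R}$.
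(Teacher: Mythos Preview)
Your proposal is correct and rests on the same structural observation the paper uses: the triangle inequalities \eqref{bqp3}--\eqref{bqp4} sit in $VS_2$ but not in $RLT_1$, while the product subtour constraints \eqref{con leqy}--\eqref{const:extra} for $|S|\ge 3$ sit in $RLT_1$ but not in $VS_2$. The paper's entire justification is a single sentence --- ``Corollary~\ref{compareRLTVS2} [follows] from the fact that constraints~\eqref{bqp4} are not present in the $RLT_1$ relaxation'' --- and does not even spell out the reverse direction or upgrade the feasibility gap to a gap in optimal values. So you are doing considerably more work than the paper does: you isolate both directions of non-inclusion, you sketch how to promote each to a strict inequality of bounds via an extreme point and a tailored objective, and you note the technical wrinkle about lifting $x$ to $(x,z)\in\mathcal{T}_E$.

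One small caution on your appeal to the numerics: the tables never directly tabulate $RLT_1$ against $VS_2$. To extract $VS_2(Q)>RLT_1(Q)$ you must cross-reference Table~\ref{RLTbounds} (which has $RLT_1$) with the CP tables (which have $VS_2$); to extract $RLT_1(Q)>VS_2(Q)$ you must use the OPesym data in Table~\ref{Esym}, where $\widetilde{RLT_1}(Q)>VS_2(Q)$, together with the monotonicity $RLT_1(Q)\ge\widetilde{RLT_1}(Q)$. Both are fine, but neither is stated explicitly in the paper, so if you invoke the numerics you should make that chain of inequalities visible.
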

Note that Corollary \ref{compareRLTVS1} follows from the discussion in Section \ref{sect:RLTold}, and
 Corollary \ref{compareRLTVS2} from the fact that constraints  \eqref{bqp4} are not present in the $ RLT_1$ relaxation.

\section{Numerical results} \label{sect:nnumerics}

In this section we compare several lower bounds from the literature with the bounds introduced in Section \ref{sect:newBounds}.
Numerical experiments are performed on an Intel(R) Core(TM) i7-9700 CPU, 3.00 GHz with 32 GB memory.
 We implement our bounds in the Julia Programming Language \cite{bezanson2017julia} and use CPLEX 12.7.1.

To solve $VS_2$ relaxation  we implement  a cutting plane algorithm that starts from $VS_1$ relaxation and iteratively adds the most violated $n\cdot m$ cuts.
The algorithm stops if no more violated cuts or after two hours.

To compute upper bounds for the here introduced benchmark instances SV we implement the tabu search algorithm
and variable neighbourhood search algorithm from \cite{Cordone2012SolvingTQ}.
In the mentioned paper the authors suggest restarting the tabu search algorithm for better performance of the algorithm.
We notice that a small number of restarts (i.e., at most 5) and then  running the  neighbourhood search algorithm  can be beneficial for large instances.
The total number of iterations of the tabu search algorithm is 5000.

\subsection{Test instances}

We test our bounds on the following benchmark  sets.

The benchmark set CP is introduced by Cordone and Passeri \cite{Cordone2012SolvingTQ} and consists of 108 instances.
These instances consist of graphs with $n\in \{ 10, 15,\ldots, 50 \}$ vertices and densities $d\in \{ 33\%, 67 \%, 100 \% \}$.
There are four types of random instances denoted by CP1, CP2, CP3, CP4.
In CP1 instances, the linear and the quadratic costs are  uniformly distributed at random in  $\{1,\ldots, 10\}$.
In CP2 instances, the linear (resp., quadratic) costs are  uniformly distributed at random in  $\{1,\ldots, 10 \}$ (resp., $\{1,\ldots, 100 \}$).
In CP3 instances, the linear (resp., quadratic) costs   are  uniformly distributed at random in  $\{1,\ldots, 100 \}$ (resp., $\{1,\ldots, 10 \}$).
In CP4 instances, the linear and the quadratic costs are  uniformly distributed at random in  $\{1,\ldots, 100\}$.

The benchmark set OP is introduced by \citet{ONCAN20101762} and consists of 480 instances.
These instances consist of complete graphs with $n\in \{ 6,7,\ldots, 17,18, 30, 30, 40, 50 \}$ vertices
and are divided into three types.
In particular, in OPsym  the linear (resp., quadratic)  costs are uniformly distributed at random in $\{1,\ldots, 100\}$ (resp., $\{1,\ldots, 20\}$).
In OPvsym the linear costs are uniformly distributed at random in $\{1,\ldots, 10 000\}$, while
the quadratic costs  are obtained associating to the vertices $i\in V$ random values $w(i)$ uniformly distributed in $\{1,\ldots, 10\}$ and
setting $q_{ \{ i,j \},\{k,l \} } = w(i)w(j)w(k)w(l)$.
In OPesym the vertices are spread uniformly at random in a square of length side 100,
the linear costs are the Euclidean distances between the end vertices of each edge,
while  the quadratic costs are the Euclidean distances between the midpoints of the edges.

We introduce the benchmark set SV\footnote{Interested reader can download SV instances from the following link:\\
 \url{https://drive.google.com/drive/folders/1bpF8AfAn2K5QGSoB6Yl76znw6PwdvItb} }. Our benchmark consists of 24 instances.
For given a size $n \in \{10,12,14,16,18,20,25,30\}$, density $d \in \{ 33\%, 67 \%, 100 \% \}$,
a maximum cost for the diagonal entries, and a maximum cost for the off-diagonal entries, we generate an instance in the following way.
$10 \%$ of the rows are randomly chosen.  These rows will have high costs with each other (between 90\% and 100\% of the maximum off-diagonal cost),
and low costs with rest (between 20\% and 40\% of the maximum off-diagonal cost).
The rows that are not selected have an interaction cost of between 50\% and 70\% of the maximum off-diagonal cost.
Finally, the impact of diagonal entries is greatly minimized, between 0 and 20\% of the maximum diagonal cost.
The cost matrices obtained this way are not symmetric, but they can be made so at the user's convenience.

\subsection{Computational Results}

We first present  results for our  benchmark set {SV}.
Table \ref{VS} reads as follows. In the first two columns we list the number of vertices and density of a graph, respectively.
In the third column we provide upper bounds computed as mentioned earlier.
In the following three columns we list the incomplete first level RLT  bound  $\widetilde{RLT_1}$ see \eqref{RLTu},
$VS_1$ bound that is \eqref{AX2first} -- \eqref{AX2last}, \eqref{bqp1} -- \eqref{bqp2},
and $VS_2$ bound that is \eqref{AX2first} -- \eqref{AX2last}, \eqref{bqp1} -- \eqref{bqp4}, see also Table \ref{VS_relaxations}.
In columns 7 -- 9 we present gaps by using the formula $100 \cdot(UB-LB)/UB$ where $LB$ stands for the value of the lower bound.
Note that the gap we present in our numerical results differs from the gap used in other QMSTP papers, where the authors use  $100\cdot (UB-LB)/LB$.
In the last two columns of Table \ref{VS} we presents time in seconds required to solve our relaxations.
We do not report time for computing  $\widetilde{RLT_1}$ as we implement \eqref{RLTu} directly,
 while the other authors that compute RTL type bounds use a more efficient way to compute those bounds.

The results in Table \ref{VS} show that  $VS_1$ bounds  are stronger than  $\widetilde{RLT_1}$  bounds for all instances
 except for the SV instance with $n=25$ and $d=100$, and  the SV instance with $n=30$ and $d=100$.
The difference between gaps for $\widetilde{RLT_1}$  and $VS_1$ bounds for  both instances is less than $1\%$.
Moreover $VS_2$ relaxation  provides a better bound than $\widetilde{RLT_1}$  for the {SV} instance with $n=25$ and $d=100$
within 2 minutes of the  cutting plane algorithm.
Similarly, $VS_2$ relaxation  provides a better bound than $\widetilde{RLT_1}$  relaxation  for
the {SV} instance $n=30$ and $d=100$ after a few iterations of the cutting plane algorithm.
Recall that the $VS_1$ relaxation is not dominated by  the incomplete first level RLT relaxation, or vice versa   in general, see Proposition \ref{compareRLTVS1}.
Therefore our benchmark set {SV} can be used to test the quality of  QMSTP bounds that are not  RTL type bounds.
Table \ref{VS} shows that computational times for solving the relaxation $VS_1$ are very small for all instances.
The computational effort for solving the relaxation $VS_2$ is  small for instances with $n\leq 25$ and $d \in \{ 33\%, 67 \% \}$.
 The results also show that it is computationally  more challenging to solve $VS_2$ for dense instances.
Note also that we can stop our cutting plane algorithm at any time and obtain a valid lower bound.

{\tiny
\begin{table}[]
\caption{SV instances: bounds and gaps}
\label{VS}
\begin{center}
\begin{tabular}{rrr rrr c rrr c rr}
\hline
\multicolumn{2}{c}{instance} & &  \multicolumn{3}{c}{lower bounds} && \multicolumn{3}{c}{Gap ($\%$)}   && \multicolumn{2}{c}{time (s)}\\
\cline{1-2}   \cline{4-6} \cline{8-10}  \cline{12-13} \\
$n$ & $d (\%)$  & $UB$  &  $\widetilde{RLT_1}$ & $VS_1$  & $VS_2$  &&  $ \widetilde{RLT_1}$ & $VS_1$ & $VS_2$ && $VS_1$ & $VS_2$ \\ \hline
10  & 33   &  4217 & {\bf  4217}    &  {\bf 4217}   & {\bf 4217}    &&   {\bf  0}  & {\bf 0}   & {\bf 0} && $< 0.05$ & $< 0.05$ \\
10  & 67   & 3981  &  3752.6  &  3876.8 & {\bf 3981}    &&  5.8  & 2.6 & {\bf 0} && $< 0.05$ & 0.3\\
10  & 100  & 3930  &  3499.1  &  3604.3 & {\bf 3857}    &&  11.0   & 8.2 & {\bf 1.9} && 0.1 & {1.9}  \\
12  & 33   & 6141  &  5859.4  &  6058.1 & {\bf 6136}    &&  4.6  & 1.3 & {\bf 0.1} && 0.0 & {0.2}\\
12  & 67   & 6050  &  5759.6  &  5910.4 & {\bf 6038}    &&  4.8  & 2.3 & {\bf 0.2} && {0.1} & 3.1 \\
12  & 100  & 6051  &  5726    & 5868.1  & {\bf 5991.8}  &&  5.4  & 3.0 & {\bf 1.0} && {0.3} & 5.0\\
14  & 33   & 8736  &  8495.2  &  8619.1 & {\bf 8709.1}  &&  2.8  & 1.3 & {\bf 0.3} && 0.1 & 0.4 \\
14  & 67   & 8606  &  8096.8  &  8306.0   & {\bf 8542.8}  &&  5.9  & 3.5 & {\bf 0.7}  && 0.2 &4.8 \\
14  & 100  & 8513  &  7556    &  7731.7 & {\bf 8267.4}  && 11.2  & 9.2 & {\bf 2.9} && 2.4 & 61.1\\
16  & 33   & 11735 &  11231.5 &  11465  & {\bf 11711.5} &&  4.3  & 2.3 & {\bf 0.2} && 0.1 & 1.0\\
16  & 67   & 11610 &  10559.8 &  10816.2 & {\bf 11335.3} && 9.0  & 6.8 & {\bf 2.4} && 1.7 & 50.4 \\
16  & 100  & 11516 &  10089.6 &  10302.7 & {\bf 11003.5} && 12.4 & 10.5 & {\bf 4.5} && 8.4 & 241.1 \\
18  & 33   & 15125 &  13995.8 &  14382.9 & {\bf 14916.5} && 7.5  & 4.9  & {\bf 1.4} && 0.3 & 3.6 \\
18  & 67   & {15019} & 12976.9  &  13210.6 & {\bf 14204.6} && 13.6 & 12.0 & {\bf 5.4} && 3.2 & 200.1 \\
18  & 100  & 14943 & 13217.6  & 13501.8  & {\bf 14336.0}   && 11.5 & 9.6 & {\bf 4.1} && 2.6 & 89.6\\
20  & 33   & 19057 & 17777.2  & 18178.9  & {\bf 18778.2}  && 6.7 & 4.6 & {\bf 1.5} && 0.3 & 7.1 \\
20  & 67   & 18830 & 16325.5  & 16433.5  & {\bf 17130.4}  && 13.3& 12.7 & {\bf 9.2} && 4.8 & 163.2 \\
20  & 100  & 18812 & 16026    & 16299.6  & {\bf 17528.9}  && 14.8 &13.4 & {\bf 6.8} && 4.1& 185.15 \\
25  & 33   & 30747 & 28436.4  & 29102.6  & {\bf 30084.4} && 7.5  &  5.3   &  {\bf 2.2} &&  4.8   &  179.1 \\
25  & 67   & 30554 & 27061.5  & 27546.6  & {\bf 29186.1} &&   11.4    & 9.8    &  {\bf 4.5}    &&  2.8   & 377.9  \\
25  & 100  & 30405 & 24455.6  & 24257.7  & {\bf 26251.7}  &&  19.7    &  20.3   & {\bf 13.7}  &&   10.4   & 1260.7 \\
30  & 33   & 45184 & 40946.1  &  41995.1 & {\bf 43889.3} && 7.1 &  9.4   & {\bf 2.7}   && {34.3}  &  2268.6 \\
30  & 67   & 44989  & 37676.7  & 38089.8  & {\bf 41162.8} &&  16.25   &  15.3   &  {\bf 8.5}   &&  13.5    & 1359.9 \\
30  & 100  & {44847} & 35116.9  &  35037.5  &  {\bf 37489.8}   &&  21.7     & 21.9    &  {\bf 16.4}    && 41.8    &  2700.0 \\
\end{tabular}
\end{center}
\end{table}
}

Tables \ref{CP1} -- \ref{Sym} read similarly  to Table \ref{VS}.
We do not report results for OPvsym instances since gaps for $VS_1$ bounds for those instances  are less than or equal to $0.2 \%$.
Also, we do not present results for instances with $n>35$ since the corresponding gaps are (too) large for all bounds in the literature, including ours.
Clearly, it is a big challenge to obtain good bounds for the QMSTP when $n \geq 20$ for most of the instances and approaches.

Running times required to solve $VS_1$ relaxation for  most of the  test instances in Tables \ref{CP1} -- \ref{Sym} are similar to  times given in Table \ref{VS}.
That is, for instances CP1, CP2, CP3 and CP4 with  $n\leq 20$ and $d \in \{ 33\%, 67 \%, 100\% \}$  as well as instances with $n=25$ and $d \in \{ 33\%, 67 \% \}$
computation times  required to solve $VS_1$ relaxation are a few seconds.
Running times for solving CP1, CP2, CP3 and CP4 instances with $n\in \{30,35\}$ and $d \in \{ 33\%, 67 \% \}$  do not exceed  76 seconds per instance.
For the remaining CP instances,  largest computational time required for solving $VS_1$ relaxation is for CP3 instance  with  $n=35$ and $d=100$, and that is 4 minutes.
 $VS_1$  bounds for instances OPsym and OPesym with $n\leq 20$ are  computed in a few seconds, and for $n=30$ in less than 134 seconds.

Computational time used to compute $VS_2$ bound for each CP instance with $n=30$, $d=100$ and $n=35$, $d \in \{ 67\%, 100\% \}$  is  two hours.
Note that we allow our algorithm to run at most two hours.
Computational times required for computing $VS_2$ bounds for other instances in Tables~\ref{CP1} -- \ref{Sym}  are comparable to running  times in Table~\ref{VS}.

Results in Table \ref{CP1} -- \ref{CP4}  present bounds for CP instances and show that
the GL bounds are significantly weaker than the other listed bounds. The results also show that
$\widetilde{RLT_1}$ bounds  are stronger than $VS_1$ bounds for all instances, while
$VS_2$ bounds dominate $\widetilde{RLT_1}$ bounds for all instances except the CP1 instance with $n=35$ and $d=100$.
The results also show that $VS_2$ bounds are not dominated by $\widetilde{RLT_2}$ bounds  or vice versa.

Numerical results in Table \ref{Esym} provide bounds  for  OPesym instances and
show that $VS_1$ and $VS_2$ bounds are weaker  than  $\widetilde{RLT_1}$ and  $\widetilde{RLT_2}$ bounds  for all OPesym instances.
This is due to the fact that  there are not many violated cuts of type \eqref{bqp1} -- \eqref{bqp4}  for those instances.

Table \ref{Sym} presents results for  OPsym instances  and shows that $VS_2$ bounds are stronger  than  $\widetilde{RLT_2}$ bounds for instances with less than 16 vertices.
Moreover,  $VS_2$ bounds are stronger  than   $\widetilde{RLT_1}$ bounds for all presented instances except for $n=18$.
 Note that results in Table \ref{Esym} and Table \ref{Sym} present an average over 10 instances of a given size.

\citet{PEREIRA2015149} derive strong bounds for the QMSTP by using the idea of partitioning spanning trees into forests of a given fixed size.
The resulting model  can be seen as a generalization of the RLT relaxation.
To obtain bounds, the authors introduce a bounding procedure based on  Lagrangian relaxation.
 In \cite{PEREIRA2015149}, Table 1 the authors provide  bounds for  CP instances and  $n=25$. Our lower bound $VS_2$ is better than their bound only for CP3 instance with $n=25$ density $33\%$.
Namely,  \citet{PEREIRA2015149}  report bound 2652.5  and we  compute 2730.3.
Although bounds that result from the generalized RLT relaxation are stronger than the bounds obtained from the incomplete first level RLT relaxation,
the former  were not used within a Branch and Bound algorithm in \cite{PEREIRA2015149}. Instead, the authors use $\widetilde{RLT_1}$ relaxation
to solve several instances to optimality.
Note that our $VS_2$ bounds are stronger than $\widetilde{RLT_1}$ bounds for all CP instances except for the CP1 instance with $n = 35$ and $d = 100$,
and all   OPsym  instances except for $n=18$.

{\tiny
\begin{table}[]
\caption{CP1 instances: bounds and gaps}
\label{CP1}
\begin{tabular}{rrrrrrrrr c rrrr}
\hline
\multicolumn{2}{c}{instance} & &  \multicolumn{5}{c}{lower bounds} & \multicolumn{5}{c}{Gap ($\%$)}  \\
\cline{1-2}   \cline{4-8} \cline{10-14} \\
$n$ & $d (\%)$  & $UB$ & $GL$ & $\widetilde{RLT_1}$ & $\widetilde{RLT_2}$ & $VS_1$ & $VS_2$ & & $GL$ & $ \widetilde{RLT_1}$ & $ \widetilde{RLT_2}$  & $VS_1$ & $VS_2$ \\
\hline
10 & 33 & 350 & 299 & {\bf 350} & 344,1 & {\bf 350} & {\bf 350} &  & 14.6 & {\bf 0} & 1.7 &{\bf 0} & {\bf 0} \\
10 & 67 & 255 & 149 & 202.2 & 226.1 & 166.4 & {\bf 248.8} &  & 41.6 & 20.7 & 11.3 & 34.7 & {\bf 2.4} \\
10 & 100 & 239 & 120 & 159.7 & 199.0 & 140.7 &{\bf 201.3} &  & 49.8 & 33.2 & 16.7 & 41.1 & {\bf 15.8} \\
 &  &  &  &  &  &  &  &  &  &  &  &  &  \\
15 & 33 & 745 & 445 & 578.2 & 637.8 & 487 & {\bf 709.2} &  & 40.3 & 22.4 & 14.4 & 34.6 & {\bf 4.8} \\
15 & 67 & 659 & 283 & 385.4 & {\bf 488.9} & 324.4 &  478.9 &  & 57.1 & 41.5 & {\bf 25.8} & 50.8 & 27.3 \\
15 & 100 & 620 & 246 & 320.9 & {\bf 442.9} & 281.5 & 386.1 &  & 60.3 & 48.2 & {\bf 28.6} & 54.6 & 37.7 \\
 &  &  &  &  &  &  &  &  &  &  &  &  &  \\
20 & 33 & 1379 & 690 & 888 & 1056.7 & 740.9 & {\bf 1165.4} &  & 50.0 & 35.6 & 23.4 & 46.3 & {\bf 15.5} \\
20 & 67 & 1252 & 454 & 603.1 & {\bf 843.1} & 523.3 & 742.7 &  & 63.7 & 51.8 & {\bf 32.7} & 58.2 & 40.7 \\
20 & 100 & 1174 & 398 & 506.9 & {\bf 737.9} & 436.7 & 587.7 &  & 66.1 & 56.8 & {\bf 37.1} & 62.8 & 49.9 \\
 &  &  &  &  &  &  &  &  &  &  &  &  &  \\
25 & 33 & 2185 & 985 & 1285.1 & 1594.9 & 1045.3 & {\bf  1615.2} &  & 54.9 & 41.2 & 27.0 & 52.2 &  {\bf 26.1} \\
25 & 67 & 2023 & 660 & 834.9 & {\bf 1239.6} & 717 & 1003.7 &  & 67.4 & 58.7 & {\bf 38.7} & 64.6 & 50.4 \\
25 & 100 & 1943 & 596 & 719.1 & {\bf 1091.0} & 645.6 & 823.4 &  & 69.3 & 63.0 & {\bf 43.9} & 66.8 & 57.6 \\
 &  &  &  &  &  &  &  &  &  &  &  &  &  \\
30 & 33 & 3205 & 1260 & 1617.9 & {\bf 2149.6} & 1352.6 & 2062.12 &  & 60.7 & 49.5 & {\bf  32.9} & 57.8 & 35.7 \\
30 & 67 & 2998 & 916 & 1118.2 & {\bf 1681.4} & 956.6 & 1309.6 &  & 69.4 & 62.7 & {\bf 43.9} & 68.1 & 56.3 \\
30 & 100 & 2874 & 854 & 986.3 & {\bf 1495.3} & 906.9 & 1069.5 &  & 70.3 & 65.7 & {\bf 48.0} & 68.4 & 62.8 \\
 &  &  &  &  &  &  &  &  &  &  &  &  &  \\
35 & 33 & 4474 & 1597 & 2014.4 & {\bf 2812.1} & 1690.5 & 2529.2 &  & 64.3 & 55.0 & {\bf 37.1} & 62.2 & 43.5 \\
35 & 67 & 4147 & 1215 & 1437.3 & {\bf 2208.2} & 1262.7 & 1653.2 &  & 70.7 & 65.3 & {\bf 46.8} & 69.6 & 60.1 \\
35 & 100 & 4000 & 1156 & 1303.8 & {\bf 1953.1} & 1222.1 & 1286.4 &  & 71.1 & 67.4 &{\bf  51.2} & 69.4 & 67.8
\end{tabular}
\end{table}
}

{\tiny
\begin{table}[]
\caption{CP2 instances: bounds and gaps}
\label{CP2}
\begin{tabular}{rrrrrrrrr c rrrr}
\hline
\multicolumn{2}{c}{instance} & &  \multicolumn{5}{c}{lower bounds} & \multicolumn{5}{c}{Gap ($\%$)}  \\
\cline{1-2}   \cline{4-8} \cline{10-14} \\
$n$ & $d (\%)$  & $UB$ & $GL$ & $\widetilde{RLT_1}$ & $ \widetilde{RLT_2}$ & $VS_1$ & $VS_2$ & & $GL$ & $ \widetilde{RLT_1}$ & $ \widetilde{RLT_2}$  & $VS_1$ & $VS_2$ \\
\hline
10 & 33 & 3122 & 2562 & {\bf 3122} & 3045.9 & 3114.6 & {\bf 3122} &  & 17.9 & {\bf 0} & 2.4 & 0.2 & {\bf 0} \\
10 & 67 & 2042 & 809 & 1399.6 & 1710.2 & 1014.9 & {\bf 1946.2} &  & 60.4 & 31.5 & 16.2 & 50.3 & {\bf 4.7} \\
10 & 100 & 1815 & 553 & 937 & 1384.4 & 733 & {\bf 1388.4} &  & 69.5 & 48.4 & 23.7 & 59.6 & {\bf 23.5} \\
 &  &  &  &  &  &  &  &  &  &  &  &  &  \\
15 & 33 & 6539 & 3272 & 4684.1 & 5329.3 & 3690.9 & {\bf 6133.7} &  & 50.0 & 28.4 & 18.5 & 43.6 & {\bf 6.2} \\
15 & 67 & 5573 & 1555 & 2589.7 & {\bf 3760.5} & 1908.1 & 3637 &  & 72.1 & 53.5 & {\bf 32.5} & 65.8 & 34.7 \\
15 & 100 & 5184 & 1070 & 1829.9 &{\bf  3236.0} & 1406 & 2581.3 &  & 79.4 & 64.7 & {\bf 37.6} & 72.9 & 50.2 \\
 &  &  &  &  &  &  &  &  &  &  &  &  &  \\
20 & 33 & 12425 & 4801 & 7035.7 & 8849.7 & 5390.8 & {\bf 10106.7} &  & 61.4 & 43.4 & 28.8 & 56.6 & {\bf 18.7} \\
20 & 67 & 10893 & 2352 & 3816.7 & {\bf 6573.9} & 2907.5 & 5415.4 &  & 78.4 & 65.0 & {\bf 39.6} & 73.3 & 50.3 \\
20 & 100 & 10215 & 1676 & 2748.2 & {\bf 5442.2} & 2060.2 & 3720.7 &  & 83.6 & 73.1 & {\bf 46.7} & 79.8 & 63.6 \\
 &  &  &  &  &  &  &  &  &  &  &  &  &  \\
25 & 33 & 19976 & 6642 & 10068.5 & 13460.9 & 7361.8 & {\bf 13722.1} &  & 66.8 & 49.6 & 32.6 & 63.1 & {\bf 31.3} \\
25 & 67 & 18251 & 3196 & 5054.3 & {\bf 9666.8} & 3795.7 & 7015.5 &  & 82.5 & 72.3 & {\bf 47.0} & 79.2 & 61.6 \\
25 & 100 & 17411 & 2123 & 3469 & {\bf 7921.3} & 2623.2 & 4759.9 &  & 87.8 & 80.1 & {\bf 54.5} & 84.9 & 72.7 \\
 &  &  &  &  &  &  &  &  &  &  &  &  &  \\
30 & 33 & 29731 & 7953 & 12046.6 & {\bf 17942.7} & 9120.5 & 17066.6 &  & 73.3 & 59.5 & {\bf 39.6} & 69.3 & 42.6 \\
30 & 67 & 27581 & 3991 & 6382 & {\bf 12840.3} & 4684.5 & 8621.6 &  & 85.5 & 76.9 &{\bf  53.4} & 83.0 & 68.7 \\
30 & 100 & 26146 & 2712 & 4332.4 & {\bf 10624.1} & 3271.8 & 5880 &  & 89.6 & 83.4 &{\bf  59.4} & 87.5 & 77.5 \\
 &  &  &  &  &  &  &  &  &  &  &  &  &  \\
35 & 33 & 42305 & 9995 & 14684.1 & {\bf 23568.2} & 10771 & 20378.3 &  & 76.4 & 65.3 & {\bf 44.3} & 74.5 & 51.8 \\
35 & 67 & 38490 & 4778 & 7409 & {\bf 16274.8} & 5517.8 & 10165.3 &  & 87.6 & 80.8 & {\bf 57.7} & 85.7 & 73.6 \\
35 & 100 & 36723 & 3388 & 5241.7 & {\bf 13687.3} & 4016.8 & 6782.8 &  & 90.8 & 85.7 & {\bf 62.7} & 89.1 & 81.5
\end{tabular}
\end{table} }

{\tiny
\begin{table}[]
\caption{CP3 instances: bounds and gaps}
\label{CP3}
\begin{tabular}{rrrrrrrrr c rrrr}
\hline
\multicolumn{2}{c}{instance} & &  \multicolumn{5}{c}{lower bounds} & \multicolumn{5}{c}{Gap ($\%$)}  \\
\cline{1-2}   \cline{4-8} \cline{10-14} \\
$n$ & $d (\%)$  & $UB$ & $GL$ & $\widetilde{RLT_1}$ & $ \widetilde{RLT_2}$ & $VS_1$ & $VS_2$ & & $GL$ & $ \widetilde{RLT_1}$ & $ \widetilde{RLT_2}$  & $VS_1$ & $VS_2$ \\
\hline
10 & 33 & 646 & 589 & {\bf 646} & {\bf 646} & {\bf 646} & {\bf 646} &  & 8.8 & {\bf 0} & {\bf 0} & {\bf 0} & {\bf 0} \\
10 & 67 & 488 & 320 & {\bf 488} & 476.1 & 474.9 & {\bf 488} &  & 34.4 & {\bf 0} & 2.4 & 2.7 & {\bf 0} \\
10 & 100 & 426 & 234 & 386.2 & 401.1 & 360.4 & {\bf 426} &  & 45.1 & 9.3 & 5.8 & 15.4 & {\bf 0} \\
 &  &  &  &  &  &  &  &  &  &  &  &  &  \\
15 & 33 & 1236 & 845 & 1180.5 & 1176.0 & 1113.5 & {\bf 1218} &  & 31.6 & 4.5 & 4.9 & 9.9 & {\bf 1.5} \\
15 & 67 & 966 & 508 & 848.1 & 886.2 & 771.7 & {\bf 965.3} &  & 47.4 & 12.2 & 8.3 & 20.1 & {\bf 0.1} \\
15 & 100 & 975 & 421 & 780 & 849.3 & 724.9 & {\bf 910} &  & 56.8 & 20.0 & 12.9 & 25.7 & {\bf 6.7} \\
 &  &  &  &  &  &  &  &  &  &  &  &  &  \\
20 & 33 & 1972 & 1131 & 1672.6 & 1759.1 & 1511.9 & {\bf 1911.6} &  & 42.6 & 15.2 & 10.8 & 23.3 & {\bf 3.1} \\
20 & 67 & 1792 & 743 & 1307.1 & 1470.1 & 1190 &{\bf  1541} &  & 58.5 & 27.1 & 18.0 & 33.6 & {\bf 14.0} \\
20 & 100 & 1544 & 532 & 1056.1 & 1220.6 & 949.7 & {\bf 1257.5} &  & 65.5 & 31.6 & 20.9 & 38.5 & {\bf 18.6} \\
 &  &  &  &  &  &  &  &  &  &  &  &  &  \\
25 & 33 & 2976 & 1448 & 2289.2 & 2488.3 & 2071.7 & {\bf 2730.3} &  & 51.3 & 23.1 & 16.4 & 30.4 & {\bf 8.3} \\
25 & 67 & 2546 & 888 & 1630 & 1917.2 & 1468.8 & {\bf 1932.1} &  & 65.1 & 36.0 & 24.7 & 42.3 & {\bf 24.1} \\
25 & 100 & 2471 & 761 & 1409.6 & {\bf 1740.1} & 1284.1 & 1657.5 &  & 69.2 & 43.0 & {\bf 29.6} & 48.0 & 32.9 \\
 &  &  &  &  &  &  &  &  &  &  &  &  &  \\
30 & 33 & 4070 & 1834 & 2856.1 & 3235.3 & 2574.2 & {\bf 3383.1} &  & 54.9 & 29.8 & 20.5 & 36.8 & {\bf 16.9} \\
30 & 67 & 3649 & 1152 & 2053.5 & {\bf 2516.6} & 1854.9 & 2425.4 &  & 68.4 & 43.7 & {\bf 31.0} & 49.2 & 33.5 \\
30 & 100 & 3483 & 986 & 1776.1 & {\bf 2257.3} & 1627.5 & 2048.2 &  & 71.7 & 49.0 & {\bf 35.2} & 53.3 & 41.2 \\
 &  &  &  &  &  &  &  &  &  &  &  &  &  \\
35 & 33 & 5423 & 2060 & 3360 & 3946.9 & 3007.1 & {\bf 4016} &  & 62.0 & 38.0 & 27.2 & 44.5 & {\bf 25.9} \\
35 & 67 & 4981 & 1430 & 2515.7 & {\bf 3195.0} & 2324.1 & 2938.9 &  & 71.3 & 49.5 & {\bf 35.9} & 53.3 & 41.0 \\
35 & 100 & 4770 & 1288 & 2190.1 & {\bf  2866.6} & 1995.7 & 2504.1 &  & 73.0 & 54.1 & {\bf 39.9} & 58.2 & 47.5
\end{tabular}
\end{table} }

{\tiny
\begin{table}[]
\caption{CP4 instances: bounds and gaps}
\label{CP4}
\begin{tabular}{rrrrrrrrr c rrrr}
\hline
\multicolumn{2}{c}{instance} & &  \multicolumn{5}{c}{lower bounds} & \multicolumn{5}{c}{Gap ($\%$)}  \\
\cline{1-2}   \cline{4-8} \cline{10-14} \\
$n$ & $d (\%)$  & $UB$ & $GL$ & $\widetilde{RLT_1}$ & $ \widetilde{RLT_2}$ & $VS_1$ & $VS_2$ & & $GL$ & $ \widetilde{RLT_1}$ & $ \widetilde{RLT_2}$  & $VS_1$ & $VS_2$ \\
\hline
10 & 33 & 3486 & 2891 & 3486 & 3424.4 & {\bf 3486} & {\bf 3486} &  & 17.1 & {\bf 0} & 1.8 & {\bf 0} & {\bf 0} \\
10 & 67 & 2404 & 1158 & 1794 & 2076.0 & 1408.8 & {\bf 2330.3} &  & 51.8 & 25.4 & 13.6 & 41.4 & {\bf 3.1} \\
10 & 100 & 2197 & 823 & 1321.1 & 1743.7 & 1120.2 & {\bf 1794.7} &  & 62.5 & 39.9 & 20.6 & 49.0 & {\bf 18.3} \\
 &  &  &  &  &  &  &  &  &  &  &  &  &  \\
15 & 33 & 7245 & 3859 & 5354.8 & 6017.4 & 4371.2 & {\bf 6819.9} &  & 46.7 & 26.1 & 16.9 & 39.7 & {\bf 5.9} \\
15 & 67 & 6188 & 2003 & 3214.5 & {\bf 4339.4} & 2542.9 & 4276.8 &  & 67.6 & 48.1 & {\bf 29.9} & 58.9 & 30.9 \\
15 & 100 & 5879 & 1567 & 2490 & {\bf 3865.2} & 2056.7 & 3236.1 &  & 73.3 & 57.6 & {\bf 34.3} & 65.0 & 45.0 \\
 &  &  &  &  &  &  &  &  &  &  &  &  &  \\
20 & 33 & 13288 & 5646 & 7914.2 & 9741.9 & 6256.3 & {\bf 10966.6} &  & 57.5 & 40.4 & 26.7 & 52.9 & {\bf 17.5} \\
20 & 67 & 11893 & 2949 & 4693.4 & {\bf 7442.4} & 3791.7 & 6303.3 &  & 75.2 & 60.5 & {\bf 37.4} & 68.1 & 47.0 \\
20 & 100 & 11101 & 2103 & 3574.1 & {\bf 6219.0} & 2887 & 4556.5 &  & 81.1 & 67.8 & {\bf 44.0} & 74.0 & 59.0 \\
 &  &  &  &  &  &  &  &  &  &  &  &  &  \\
25 & 33 & 21176 & 7631 & 11186.5 & 14584.0 & 8507.1 & {\bf 14859.1} &  & 64.0 & 47.2 & 31.1 & 59.8 & {\bf 29.8} \\
25 & 67 & 19207 & 3821 & 6095.5 & {\bf 10652.8} & 4828.6 & 8061.4 &  & 80.1 & 68.3 & {\bf 44.5} & 74.9 & 58.0 \\
25 & 100 & 18370 & 2534 & 4490.3 & {\bf 8874.4} & 3649.3 & 5808.4 &  & 86.2 & 75.6 & {\bf 51.7} & 80.1 & 68.4 \\
 &  &  &  &  &  &  &  &  &  &  &  &  &  \\
30 & 33 & 31077 & 9255 & 13401 & {\bf 19278.5} & 10451 & 18403.3 &  & 70.2 & 56.9 & {\bf 38.0} & 66.4 & 40.8 \\
30 & 67 & 28777 & 4830 & 7566.9 &{\bf  14017.0} & 5935.5 & 9885.3 &  & 83.2 & 73.7 & {\bf 51.3} & 79.4 & 65.6 \\
30 & 100 & 27314 & 3198 & 5555 & {\bf 11803.8} & 4495.3 & 7120.8 &  & 88.3 & 79.7 & {\bf 56.8} & 83.5 & 73.9 \\
 &  &  &  &  &  &  &  &  &  &  &  &  &  \\
35 & 33 & 43629 & 11107 & 16170.9 & {\bf 24988.0} & 12271.2 & 21875 &  & 74.5 & 62.9 & {\bf 42.7} & 71.9 & 49.9 \\
35 & 67 & 39660 & 5631 & 8884.4 & {\bf 17970.1} & 6998.4 & 11635.8 &  & 85.8 & 77.6 & {\bf 54.7} & 82.4 & 70.7 \\
35 & 100 & 38049 & 3917 & 6707 & {\bf 15039.1} & 5477.4 & 8271 &  & 89.7 & 82.4 & {\bf 60.5} & 85.6 & 78.3
\end{tabular}
\end{table} }

{\tiny
\begin{table}[]
\caption{OPesym instances: bounds and gaps}
\label{Esym}
\begin{tabular}{rrrrrrrrr c rrrr}
\hline
\multicolumn{2}{c}{instance} & &  \multicolumn{5}{c}{lower bounds} & \multicolumn{5}{c}{Gap ($\%$)}  \\
\cline{1-2}   \cline{4-8} \cline{10-14} \\
$n$ & $d (\%)$  & $UB$ & $GL$ & $\widetilde{RLT_1}$ & $ \widetilde{RLT_2}$ & $VS_1$ & $VS_2$ & & $GL$ & $ \widetilde{RLT_1}$ & $ \widetilde{RLT_2}$  & $VS_1$ & $VS_2$ \\
\hline
6 & 100 & 541.2 & 471.8 & {\bf 539} & 538.5 & 468.2 & 475.1 &  & 12.8 & {\bf 0.4} & 0.5 & 13.5 & 12.2 \\
7 & 100 & 783.7 & 675.8 & {\bf 781.4} & 774.4 & 651.1 & 664.3 &  & 13.8 & {\bf 0.3} & 1.2 & 16.9 & 15.2 \\
8 & 100 & 1020.1 & 887 & {\bf 1016} & 1001.1 & 807.5 & 826.1 &  & 13.0 & {\bf 0.4} & 1.9 & 20.8 & 19.0 \\
9 & 100 & 1356 & 1168.1 & {\bf 1347.9} & 1326.8 & 1172.1 & 1199.9 &  & 13.9 & {\bf 0.6} & 2.2 & 13.6 & 11.5 \\
10 & 100 & 1427.1 & 1224.4 & {\bf 1420} & 1395.0 & 1156.1 & 1197.4 &  & 14.2 & {\bf 0.5} & 2.2 & 19.0 & 16.1 \\
11 & 100 & 1545.1 & 1324.6 & {\bf 1540.5} & 1508.9 & 1265.2 & 1315.9 &  & 14.3 & {\bf 0.3} & 2.3 & 18.1 & 14.8 \\
12 & 100 & 1901.6 & 1623.5 & {\bf 1894} & 1849.8 & 1568.7 & 1626.1 &  & 14.6 & {\bf 0.4} & 2.7 & 17.5 & 14.5 \\
13 & 100 & 2175.3 & 1861.1 & {\bf 2168.8} & 2114.0 & 1803.1 & 1889.5 &  & 14.4 & {\bf 0.3} & 2.8 & 17.1 & 13.1 \\
14 & 100 & 2527.9 & 2164 & {\bf 2522.9} & 2456.7 & 2028.1 & 2121.3 &  & 14.4 & {\bf 0.2} & 2.8 & 19.8 & 16.1 \\
15 & 100 & 2588.8 & 2237.9 & {\bf 2578.5} & 2515.8 & 2256.2 & 2351.1 &  & 13.6 & {\bf 0.4} & 2.8 & 12.8 & 9.2 \\
16 & 100 & 2980.1 & 2524.6 & {\bf 2962.3} & 2884.9 & 2461.6 & 2575.2 &  & 15.3 & {\bf 0.6} & 3.2 & 17.4 & 13.6 \\
17 & 100 & 3372.2 & 2837.4 & {\bf 3342.1} & 3251.9 & 2550.7 & 2667 &  & 15.9 & {\bf 0.9} & 3.6 & 24.4 & 20.9 \\
18 & 100 & 3569 & 2999.2 &{\bf  3551.2} & 3435.0 & 2844.9 & 2995.1 &  & 16.0 & {\bf 0.5} & 3.8 & 20.3 & 16.1 \\
30 & 100 & 8056.7 & 6608.5 & {\bf 7922.0} & 7579.2 & 6299.5 & 6753.0 &  & 18.0 & {\bf 1.7} & 5.9 & 21.8 & 16.2
\end{tabular}
\end{table} }

{\tiny
\begin{table}[]
\caption{OPsym instances: bounds and gaps}
\label{Sym}
\begin{tabular}{rrrrrrrrr c rrrr}
\hline
\multicolumn{2}{c}{instance} & &  \multicolumn{5}{c}{lower bounds} & \multicolumn{5}{c}{Gap ($\%$)}  \\
\cline{1-2}   \cline{4-8} \cline{10-14} \\
$n$ & $d (\%)$  & $UB$ & $GL$ & $\widetilde{RLT_1}$ & $ \widetilde{RLT_2}$ & $VS_1$ & $VS_2$ & & $GL$ & $ \widetilde{RLT_1}$ & $ \widetilde{RLT_2}$  & $VS_1$ & $VS_2$ \\
\hline
6 & 100 & 258.4 & 147.5 & 253.5 & 254.8 & 246.4 & {\bf 257.4} &  & 42.9 & 1.9 & 1.4 & 4.6 & {\bf 0.4} \\
7 & 100 & 326.8 & 167 & 310.4 & 317.6 & 294.6 & {\bf 325.9} &  & 48.9 & 5.0 & 2.8 & 9.9 & {\bf 0.3} \\
8 & 100 & 438.5 & 203.9 & 399.7 & 414.5 & 361.9 & {\bf 434.4} &  & 53.5 & 8.8 & 5.5 & 17.5 & {\bf 0.9} \\
9 & 100 & 534.9 & 232.2 & 466.3 & 496.7 & 418.8 & {\bf 531.8} &  & 56.6 & 12.8 & 7.1 & 21.7 & {\bf 0.6} \\
10 & 100 & 653.9 & 240.3 & 529.9 & 584.4 & 473.2 & {\bf 638.8} &  & 63.3 & 19.0 & 10.6 & 27.6 & {\bf 2.3} \\
11 & 100 & 785.9 & 250.4 & 584.3 & 668.3 & 513.4 & {\bf 738.7} &  & 68.1 & 25.7 & 15.0 & 34.7 & {\bf 6.0} \\
12 & 100 & 918.5 & 256.1 & 646.8 & 755.3 & 565.4 & {\bf 828.4} &  & 72.1 & 29.6 & 17.8 & 38.4 & {\bf 9.8} \\
13 & 100 & 1067.1 & 282.4 & 706.7 & 848.9 & 614.4 & {\bf 900.3} &  & 73.5 & 33.8 & 20.4 & 42.4 & {\bf 15.6} \\
14 & 100 & 1249.8 & 300 & 799.1 & 968.8 & 693.7 & {\bf 1006.1} &  & 76.0 & 36.1 & 22.5 & 44.5 & {\bf 19.5} \\
15 & 100 & 1390.2 & 287.8 & 815.8 & 1032.1 & 694.6 & {\bf 1045.9} &  & 79.3 & 41.3 & 25.8 & 50.0 & {\bf 24.8} \\
16 & 100 & 1629.3 & 299.2 & 886.9 & {\bf 1270.9} & 761.7 & 1129.8 &  & 81.6 & 45.6 & {\bf 22.0} & 53.2 & 30.7 \\
17 & 100 & 1823.8 & 324 & 961.9 & {\bf 1258.7} & 826.1 & 1223.8 &  & 82.2 & 47.3 & {\bf 31.0} & 54.7 & 32.9 \\
18 & 100 & 2981 & 344.4 & 1467.0 & {\bf 1998.0} & 880.8 & 1298.2 &  & 88.4 & 50.8 & {\bf 33.0} & 70.5 & 56.5 \\
20 & 100 & 2572.7 & 339 & 1132.8 & {\bf 1608.9} & 967.3 & 1441.8 &  & 86.8 & 56.0 & {\bf 37.5} & 62.4 & 44.0 \\
30 & 100 & 6015.9 & 401.3 & 1664.1 & {\bf 2843.1} & 1396.8 & 2123.3 &  & 93.3 & 72.3 & {\bf 52.7} & 76.8 & 64.7 \\
\end{tabular}
\end{table} }

\section{Conclusion} \label{conclusion}

This paper introduces a  hierarchy  of lower  bounds for the quadratic minimum spanning tree problem.
Our bounds exploit an extended formulation for the MSTP from  \cite{MARTIN1991119} and the linear, exact formulation for the QMSTP from \cite{AssadXu}.
We prove that our simplest relaxation $VS_0$ is equivalent to the linearization based relaxation derived in Section \ref{sect:linBasedBnd}, see Theorem \ref{prop:equivalent}.
To improve the relaxation $VS_0$  we add facet defining inequalities of the Boolean Quadric Polytope.
The resulting relaxations  $VS_1$ and  $VS_2$ are presented in  Table \ref{VS_relaxations}.
Our relaxations have a polynomial number of constraints and can be solved by a cutting plane algorithm.

On the other hand, all relaxations in the literature have an exponential number of constraints and most of them  belong to the RLT type of bounds, see Section \ref{sect:overwierBNDS}.
The fact that our relaxations differ in both mentioned aspects from the other  relaxations,  enables us to efficiently compute bounds that are not dominated by the RLT type of bounds. \\

\medskip
\noindent
\textbf{Acknowledgements.} We would like to thank Frank de Meijer for an insightful discussions on linearization based bounds.
We also thank Renate van der Knaap for programming heuristics algorithms and computing upper bounds.

\bibliographystyle{plainnat}
\bibliography{bibQMST}

\begin{thebibliography}{37}
\providecommand{\natexlab}[1]{#1}
\providecommand{\url}[1]{\texttt{#1}}
\expandafter\ifx\csname urlstyle\endcsname\relax
  \providecommand{\doi}[1]{doi: #1}\else
  \providecommand{\doi}{doi: \begingroup \urlstyle{rm}\Url}\fi

\bibitem[Assad and Xu(1992)]{AssadXu}
Arjang Assad and Weixuan Xu.
\newblock The quadratic minimum spanning tree problem.
\newblock \emph{Naval Research Logistics (NRL)}, 39\penalty0 (3):\penalty0
  399--417, 1992.

\bibitem[Bezanson et~al.(2017)Bezanson, Edelman, Karpinski, and
  Shah]{bezanson2017julia}
Jeff Bezanson, Alan Edelman, Stefan Karpinski, and Viral~B Shah.
\newblock Julia: A fresh approach to numerical computing.
\newblock \emph{SIAM review}, 59\penalty0 (1):\penalty0 65--98, 2017.
\newblock URL \url{https://doi.org/10.1137/141000671}.

\bibitem[Carraresi and Malucelli(1992)]{Carraresi}
Paolo Carraresi and Federico Malucelli.
\newblock A new lower bound for the quadratic assignment problem.
\newblock \emph{Oper Res}, 40:\penalty0 22--27, 1992.

\bibitem[Chiang et~al.(2007)Chiang, Liu, and Huang]{CHIANG2007734}
Tzu-Chiang Chiang, Chien-Hung Liu, and Yueh-Min Huang.
\newblock A near-optimal multicast scheme for mobile ad hoc networks using a
  hybrid genetic algorithm.
\newblock \emph{Expert Systems with Applications}, 33\penalty0 (3):\penalty0
  734 -- 742, 2007.

\bibitem[Chou and Kershenbaum(1974)]{Chou1973AUA}
Wushow Chou and Aaron Kershenbaum.
\newblock A unified algorithm for designing multidrop teleprocessing networks.
\newblock \emph{IEEE Transactions on Communications}, 22:\penalty0 1762--1772,
  1974.

\bibitem[Cordone and Passeri(2012)]{Cordone2012SolvingTQ}
Roberto Cordone and Gianluca Passeri.
\newblock Solving the quadratic minimum spanning tree problem.
\newblock \emph{Appl. Math. Comput.}, 218:\penalty0 11597--11612, 2012.

\bibitem[\'{C}usti\'{c} and Punnen(2015)]{Custic2015ACO}
Ante \'{C}usti\'{c} and Abraham~P. Punnen.
\newblock A characterization of linearizable instances of the quadratic minimum
  spanning tree problem.
\newblock \emph{Journal of Combinatorial Optimization}, 35:\penalty0 436--453,
  2015.

\bibitem[\'{C}usti\'{c} et~al.(2018)\'{C}usti\'{c}, Zhang, and
  Punnen]{CUSTIC201873}
Ante \'{C}usti\'{c}, Ruonan Zhang, and Abraham~P. Punnen.
\newblock The quadratic minimum spanning tree problem and its variations.
\newblock \emph{Discrete Optimization}, 27:\penalty0 73--87, 2018.

\bibitem[de~Meijer and Sotirov(2020)]{MeijerSotirov}
Frank de~Meijer and Renata Sotirov.
\newblock The quadratic cycle cover problem: special cases and efficient
  bounds.
\newblock \emph{Journal of Combinatorial Optimization}, 39:\penalty0
  1096--1128, 2020.

\bibitem[Edmonds(1971)]{Edmonds1971MatroidsAT}
Jack Edmonds.
\newblock Matroids and the greedy algorithm.
\newblock \emph{Mathematical Programming}, 1:\penalty0 127--136, 1971.

\bibitem[Gao and Lu(2005)]{GAO2005773}
Jinwu Gao and Mei Lu.
\newblock Fuzzy quadratic minimum spanning tree problem.
\newblock \emph{Applied Mathematics and Computation}, 164\penalty0
  (3):\penalty0 773--788, 2005.

\bibitem[Gilmore(1962)]{Gilmore}
P.C. Gilmore.
\newblock Optimal and suboptimal algorithms for the quadratic assignment
  problem.
\newblock \emph{J Soc Ind Appl Math}, 10\penalty0 (2):\penalty0 305--313, 1962.

\bibitem[Guimar{\~{a}}es et~al.(2020)Guimar{\~{a}}es, da~Cunha, and
  Pereira]{GUIMARAES202046}
Dilson~A. Guimar{\~{a}}es, Alexandre~S. da~Cunha, and Dilson~L. Pereira.
\newblock Semidefinite programming lower bounds and branch-and-bound algorithms
  for the quadratic minimum spanning tree problem.
\newblock \emph{European Journal of Operational Research}, 280\penalty0
  (1):\penalty0 46 -- 58, 2020.

\bibitem[Hu and Sotirov(2021)]{HuSotirov2}
Hao Hu and Renata Sotirov.
\newblock The linearization problem of a binary quadratic problem and its
  applications.
\newblock \emph{Annals of Operations Research}, 307:\penalty0 229--249, 2021.

\bibitem[Kaibel(2011)]{Kaibel}
Volker Kaibel.
\newblock Extended formulations in combinatorial optimization.
\newblock 2011.
\newblock URL \url{http://www.mathopt.org/Optima-Issues/optima85.pdf}.

\bibitem[Kruskal(1956)]{Kruskal1956}
Joseph~B. Kruskal.
\newblock {On the shortest spanning subtree of a graph and the traveling
  salesman problem}.
\newblock In \emph{Proceedings of the American Mathematical Society, 7}, 1956.

\bibitem[Lawler(1963)]{Lawler}
Eugene~L. Lawler.
\newblock The quadratic assignment problem.
\newblock \emph{Manage Sci}, 9\penalty0 (4):\penalty0 586--599, 1963.

\bibitem[Lozano et~al.(2014)Lozano, Glover, García-Martínez, Rodríguez, and
  Martí]{LozanoEtAl}
Manuel Lozano, Fred Glover, Carlos García-Martínez, Francisco~Javier
  Rodríguez, and Rafael Martí.
\newblock Tabu search with strategic oscillation for the quadratic minimum
  spanning tree.
\newblock \emph{IIE Transactions}, 46\penalty0 (4):\penalty0 414--428, 2014.

\bibitem[Magnanti and Wolsey(1995)]{MAGNANTI1995503}
Thomas~L. Magnanti and Laurence~A. Wolsey.
\newblock \emph{{O}ptimal trees, Chapter 9}, volume~7 of \emph{Handbooks in
  Operations Research and Management Science}.
\newblock Elsevier, 1995.

\bibitem[Martin(1991)]{MARTIN1991119}
Kipp~R. Martin.
\newblock Using separation algorithms to generate mixed integer model
  reformulations.
\newblock \emph{Operations Research Letters}, 10\penalty0 (3):\penalty0
  119--128, 1991.

\bibitem[{\"O}ncan and Punnen(2010)]{ONCAN20101762}
Temel {\"O}ncan and Abraham~P. Punnen.
\newblock The quadratic minimum spanning tree problem: A lower bounding
  procedure and an efficient search algorithm.
\newblock \emph{Computers and Operations Research}, 37\penalty0 (10):\penalty0
  176--1773, 2010.

\bibitem[Padberg(1989)]{Padberg}
Manfred~W. Padberg.
\newblock The boolean quadric polytope: Some characteristics, facets and
  relatives.
\newblock \emph{Math. Program.}, 45\penalty0 (1):\penalty0 139–--172, 1989.

\bibitem[Palubeckis et~al.(2010)Palubeckis, Rubliauskas, and
  Targamadz]{PalubeckisEtAl}
Gintaras Palubeckis, Dalius Rubliauskas, and Aleksandras Targamadz.
\newblock Metaheuristic approaches for the quadratic minimum spanning tree
  problem.
\newblock \emph{Inform. Tech. Control}, 29:\penalty0 257--–268, 2010.

\bibitem[Pereira et~al.(2015{\natexlab{a}})Pereira, Gendreau, and
  da~Cunha]{PEREIRA2015149}
Dilson~L. Pereira, Michel Gendreau, and Alexandre~S. da~Cunha.
\newblock Lower bounds and exact algorithms for the quadratic minimum spanning
  tree problem.
\newblock \emph{Computers and Operations Research}, 63:\penalty0 149 -- 160,
  2015{\natexlab{a}}.

\bibitem[Pereira et~al.(2015{\natexlab{b}})Pereira, Gendreau, and
  da~Cunha]{Pereira2015BranchandcutAB}
Dilson~L. Pereira, Michel Gendreau, and Alexandre~S. da~Cunha.
\newblock Branch-and-cut and branch-and-cut-and-price algorithms for the
  adjacent only quadratic minimum spanning tree problem.
\newblock \emph{Networks}, 65:\penalty0 367--379, 2015{\natexlab{b}}.

\bibitem[Prim(1957)]{Prim1957}
Robert~C. Prim.
\newblock Shortest connection networks and some generalizations.
\newblock \emph{The Bell Systems Technical Journal}, 36\penalty0 (6):\penalty0
  1389--1401, 1957.

\bibitem[Punnen(2001)]{Punnen2001}
Abraham~P. Punnen.
\newblock Combinatorial optimization with multiplicative objective function.
\newblock \emph{Int. J. Oper. Quant. Manag.}, 7:\penalty0 205–--209, 2001.

\bibitem[Rhys(1970)]{Rhys}
John M.~W. Rhys.
\newblock A selection problem of shared fixed costs and network flows.
\newblock \emph{Management Science}, 17\penalty0 (3):\penalty0 200--207, 1970.

\bibitem[Rostami and Malucelli(2015)]{Rostami2015LowerBF}
Borzou Rostami and Federico Malucelli.
\newblock Lower bounds for the quadratic minimum spanning tree problem based on
  reduced cost computation.
\newblock \emph{Comput. Oper. Res.}, 64:\penalty0 178--188, 2015.

\bibitem[Rostami et~al.(2018)Rostami, Chassein, Hopf, Frey, Buchheim,
  Malucelli, and Goerigk]{Rostami}
Borzou Rostami, Andr\'e Chassein, Michael Hopf, Davide Frey, Christoph
  Buchheim, Federico Malucelli, and Marc Goerigk.
\newblock The quadratic shortest path problem: complexity, approximability and
  solution methods.
\newblock \emph{Eur. J. Oper. Res.}, 268\penalty0 (2):\penalty0 473--485, 2018.

\bibitem[Sherali and Adams(1990)]{SheraliAdams90}
Hanif~D. Sherali and Warren~P. Adams.
\newblock A hierarchy of relaxations between the continuous and convex hull
  representations for zero-one programming problems.
\newblock \emph{SIAM Journal on Discrete Mathematics}, 3\penalty0 (3):\penalty0
  411--430, 1990.

\bibitem[Sherali and Adams(1994)]{SHERALI199483}
Hanif~D. Sherali and Warren~P. Adams.
\newblock A hierarchy of relaxations and convex hull characterizations for
  mixed-integer zero-one programming problems.
\newblock \emph{Discrete Applied Mathematics}, 52\penalty0 (1):\penalty0
  83--106, 1994.

\bibitem[Sherali and Adams(2013)]{sherali2013reformulation}
Hanif~D. Sherali and Warren~P. Adams.
\newblock \emph{A reformulation-linearization technique for solving discrete
  and continuous nonconvex problems}, volume~31.
\newblock Springer Science \& Business Media, 2013.

\bibitem[Soak et~al.(2005)Soak, Corne, and Ahn]{SoakCorne}
Sang-Moon Soak, David Corne, and Byung-Ha Ahn.
\newblock A new evolutionary algorithm for spanning-tree based communication
  network design.
\newblock \emph{IEICE Transactions on Communications}, E88-B\penalty0
  (10):\penalty0 4090--4093, 2005.

\bibitem[Soak et~al.(2006)Soak, Corne, and Ahn]{Soak2006TheER}
Sang-Moon Soak, David~W. Corne, and Byung-Ha Ahn.
\newblock The edge-window-decoder representation for tree-based problems.
\newblock \emph{IEEE Transactions on Evolutionary Computation}, 10:\penalty0
  124--144, 2006.

\bibitem[Sundar and Singh(2010)]{SUNDAR20103182}
Shyam Sundar and Alok Singh.
\newblock A swarm intelligence approach to the quadratic minimum spanning tree
  problem.
\newblock \emph{Information Sciences}, 180\penalty0 (17):\penalty0 3182--3191,
  2010.
\newblock Including Special Section on Virtual Agent and Organization Modeling:
  Theory and Applications.

\bibitem[Zhout and Gen(1998)]{ZHOUT1998229}
Gengui Zhout and Mitsuo Gen.
\newblock An effective genetic algorithm approach to the quadratic minimum
  spanning tree problem.
\newblock \emph{Computers and Operations Research}, 25\penalty0 (3):\penalty0
  229--237, 1998.

\end{thebibliography}

\end{document}